\newtheorem{thm}{Theorem}[section]
\newtheorem{ques}{Question}[section]
\newtheorem{cor}{Corollary}[section]
\newtheorem{exam}{Example}
\newtheorem{conjecture}{Conjecture}[section]
\newtheorem{claim}{Claim}[section]
\begin{document}

\title{Advances on two spectral conjectures regarding booksize of graphs\footnote{Supported by the National 
Natural Science Foundation of China (No.\! 12571369) and the Natural Science Foundation of Shanghai (No.\! 25ZR1402390).}}

\author{{\bf Mingqing Zhai$^a$},\quad {\bf Rui Li$^b$},\quad
{\bf Zhenzhen Lou$^c$}\thanks{Corresponding author. E-mail addresses: mqzhai@njust.edu.cn (M. Zhai), lirui@hhu.edu.cn (R. Li),
 louzz@usst.edu.cn (Z. Lou).}
\\
{\footnotesize $^a$ School of Mathematics and Statistics, Nanjing University of Science and Technology}\\
{\footnotesize Nanjing 210094, Jiangsu, China}\\
{\footnotesize $^b$ School of Mathematics, Hohai University,
         Nanjing 211100, Jiangsu, China}\\
{\footnotesize $^c$ College of Science, University of Shanghai for Science and Technology}\\
{\footnotesize Shanghai 200093, China}
}
\date{}

\maketitle

\begin{abstract}
The \emph{booksize} \( \mathrm{bk}(G) \) of a graph \( G \),
introduced by Erd\H{o}s, refers to the maximum integer \( r \) for which $G$ contains the book \( B_r \) as a subgraph. 
This paper investigates two open problems in spectral graph theory related to the booksize of graphs.

First, we prove that for any positive integer $r$ and any \( B_{r+1} \)-free graph \( G \) with \( m \geq (9r)^2 \) edges, 
the spectral radius satisfies \( \rho(G) \leq \sqrt{m} \). 
Equality holds if and only if \( G \) is a complete bipartite graph. 
This result improves the lower bound on the booksize of Nosal graphs (i.e., graphs with \( \rho(G) > \sqrt{m} \)) 
from the previously established \( \mathrm{bk}(G) > \frac{1}{144}\sqrt{m} \) to \( \mathrm{bk}(G) > \frac{1}{9}\sqrt{m} \), 
presenting a significant advancement in the booksize conjecture proposed Li, Liu, and Zhang.

Second, we show that for any positive integer $r$ and any non-bipartite \( B_{r+1} \)-free graph \( G \) with \( m \geq (240r)^2 \) edges, 
the spectral radius $\rho$ satisfies \(\rho^2<m-1+\frac{2}{\rho-1}\),
unless $G$ is isomorphic to $S^+_{m,s}$ for some $s\in\{1,\ldots,r\}$. 
This resolves Liu and Miao's conjecture and further reveals an interesting phenomenon: even with a weaker
spectral condition, $\rho^2\geq m-1+\frac2{\rho-1}$, we can still derive the supersaturation of the booksize for non-bipartite graphs.

\end{abstract}

\begin{flushleft}
\noindent\textbf{Keywords:} Spectral graph theory; book graph; spectral radius; booksize

\textbf{AMS subject classifications:} 05C50; 05C35
\end{flushleft}

\section{Introduction}\label{se-1}
A \emph{book of size $r$}, denoted by \( B_r \), is the graph formed by \( r \) triangles sharing a common edge. 
The \emph{booksize} \( \mathrm{bk}(G) \) of a graph \( G \) is defined as the maximum integer \( r \) for which \( B_r \) is a subgraph of \( G \). 
The investigation of books in graphs has a rich history in extremal graph theory, originating from the seminal work of Erd\H{o}s~\cite{Erdos}, 
who established that every graph on \( n \) vertices with \( \lfloor n^2/4 \rfloor + 1 \) edges satisfies \( \mathrm{bk}(G) = \Theta(n) \). 
Erd\H{o}s further conjectured that \( \mathrm{bk}(G) \geq n/6 \), 
a result subsequently proved by Edwards (unpublished, see~\cite[Lemma 4]{E-F-C}) and independently by Khad\v{z}iivanov and Nikiforov~\cite{K-N}. 
Alternative proofs were later provided by Bollob\'{a}s and Nikiforov~\cite{B-N} and Li, Feng, and Peng~\cite{L-F-P}. 
For additional results on books, we refer to~\cite{C-F-S,C-F-W,Zhai-Lin} and the references therein. 
In this paper, all graphs under consideration are simple, connected, and possess \( m \) edges with no isolated vertices.

In spectral graph theory, a fundamental result was established by Nosal~\cite{Nosal-1}: 
every triangle-free graph with \( m \) edges satisfies \( \rho(G) \leq \sqrt{m} \). 
Nikiforov~\cite{Nikiforov2002} strengthened this by demonstrating that if \( \rho(G) \geq \sqrt{m} \), 
then \( G \) contains a triangle unless it is a complete bipartite graph. 
This spectral analogue of Mantel's theorem was extended by Lin, Ning, and Wu~\cite{Lin-Ning-Wu} to graphs with \( \rho(G) \geq \sqrt{m-1} \).

A graph \( G \) with \( m \) edges is called \emph{Nosal} if \( \rho(G) > \sqrt{m} \) (see~\cite{L-L-Z-1}). 
Nosal graphs exhibit intriguing structural characteristics beyond those mentioned above. 
Nikiforov~\cite{Nikiforov2009} showed that every Nosal graph of size at least 10 contains a 4-cycle unless it is a star. 
Zhai, Lin, and Shu~\cite{Zhai-Lin-Shu} proved that every Nosal graph contains a copy of \( K_{2,r} \) with \( r \geq \sqrt{m/4} + 1 \) unless it is a star. 
This bound was recently improved to an asymptotically optimal form, \( r \geq \sqrt{m/2} - O(1) \), by Li, Liu, and Zhang~\cite{L-L-Z-2}.

Given that a book \( B_r \) can be constructed from \( K_{2,r} \) by adding an edge, 
Zhai, Lin, and Shu~\cite{Zhai-Lin-Shu} conjectured that Nosal graphs contain large books. 
Using the method of counting walks and an algorithm for deleting edges with small Perron coordinates, 
Nikiforov~\cite{Nikiforov-book} confirmed Zhai, Lin, and Shu's conjecture by establishing the following result:

\begin{thm}[\cite{Nikiforov-book}]\label{Niki}
Every Nosal graph \( G \) satisfies \[ \mathrm{bk}(G) > \frac{1}{12}\sqrt[4]{m}. \]
\end{thm}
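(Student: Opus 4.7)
The plan is to combine a spectral--triangle inequality with an iterative edge-pruning argument. Let $x$ be the positive Perron eigenvector of $A(G)$ normalized so that $\|x\|_{2}=1$, write $\rho=\rho(G)$ and $b=\mathrm{bk}(G)$, and record the standard lower bound
$$
t(G)\;\geq\;\tfrac{1}{3}\rho(\rho^{2}-m),
$$
obtained by writing $6t(G)=\mathrm{tr}(A^{3})=\sum_{i}\lambda_{i}^{3}\geq\rho^{3}-\rho\sum_{\lambda_{i}<0}\lambda_{i}^{2}\geq\rho^{3}-\rho(2m-\rho^{2})=2\rho(\rho^{2}-m)$. Since every edge of $G$ lies in at most $b$ triangles, one also has $3t(G)\leq bm$, and combining yields
$$
b\;\geq\;\frac{\rho(\rho^{2}-m)}{m}.
$$

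This direct inequality is not enough on its own, since $\rho^{2}-m$ may be arbitrarily small in a Nosal graph; the amplification comes from iterative pruning. While the current graph contains an edge $uv$ with $x_{u}x_{v}$ below a threshold $\delta$ of order $1/(4\sqrt{m})$, delete it. Using the perturbation inequality $\rho(G)-\rho(G-uv)\leq 2x_{u}x_{v}$, each such deletion reduces $\rho$ by less than the corresponding drop in $\sqrt{m}$, so the Nosal property is preserved throughout and the pruned graph $G^{*}$ satisfies $\rho(G^{*})>\sqrt{m^{*}}$ together with $\mathrm{bk}(G^{*})\leq b$. In $G^{*}$ every surviving edge has Perron product (with respect to $x$) at least $\delta$; combining this structural information with the identity $\rho(G^{*})=2\sum_{uv\in E(G^{*})}y_{u}y_{v}$ for the Perron vector $y$ of $G^{*}$ yields a quantitative lower bound on $\rho(G^{*})^{2}-m^{*}$ of order $m^{*\,3/4}$. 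Substituting back into the displayed inequality applied to $G^{*}$, one concludes $b\geq\mathrm{bk}(G^{*})>\tfrac{1}{12}m^{1/4}$.

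The main obstacle is the compatibility between the pruning threshold and the evolving Perron vector: $\delta$ is defined through the original eigenvector $x$ of $G$, while the amplified Nosal gap in $G^{*}$ must ultimately be expressed in terms of the new Perron vector $y$. I would resolve this by introducing a potential $\Phi(H)=\rho(H)^{2}-e(H)$ that is monotone (non-increasing) along the pruning, and by invoking a Cauchy--Schwarz-type inequality to bound $\sum_{uv\in E(G^{*})}y_{u}y_{v}$ from below in terms of $\delta$ and $m^{*}$ independently of the individual coordinates of $y$. The explicit constant $\tfrac{1}{12}$ then arises from balancing the choice $\delta\sim 1/(4\sqrt{m})$ against the numerical factors in both the trace identity and the perturbation inequality.
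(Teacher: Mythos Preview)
First, note that the paper does not contain a proof of this theorem at all; it is quoted from Nikiforov's preprint and only described in one sentence as ``using the method of counting walks and an algorithm for deleting edges with small Perron coordinates.'' So there is no in-paper argument to compare against line by line, and your sketch is really an attempt to reconstruct Nikiforov's original proof.

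Your outline starts with the right ingredients --- the trace bound $6t(G)\ge 2\rho(\rho^{2}-m)$ together with $3t(G)\le b\,m$, and a pruning procedure that removes edges of small Perron weight --- and this is indeed in the spirit of Nikiforov's method. However, the decisive step, the amplification of the Nosal gap in the pruned graph $G^{*}$, is asserted but not established, and the tools you name do not deliver it.

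Specifically: (i) The identity $\rho(G^{*})=2\sum_{uv\in E(G^{*})}y_{u}y_{v}$ combined with the termination condition $y_{u}y_{v}\ge\delta$ only yields $\rho(G^{*})\ge 2m^{*}\delta$, which for $\delta\asymp m^{-1/2}$ is of order $\sqrt{m^{*}}$; this is no stronger than the Nosal inequality you already have and says nothing about the size of $\rho(G^{*})^{2}-m^{*}$. No Cauchy--Schwarz manipulation of $\sum y_{u}y_{v}$ alone will produce a gap of order $(m^{*})^{3/4}$. (ii) You claim the potential $\Phi(H)=\rho(H)^{2}-e(H)$ is \emph{non-increasing} along the pruning; even if true, that monotonicity works against you, since you need $\Phi(G^{*})$ to be \emph{large}, not merely to remain positive. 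In fact neither monotonicity of $\Phi$ is available in general, and Nikiforov does not argue via such a potential. (iii) You correctly identify the mismatch between the original eigenvector $x$ (which controls both the deletion threshold and the perturbation bound $\rho-\rho'\le 2x_{u}x_{v}$) and the final eigenvector $y$, but your proposed fix is not carried out. The actual argument recomputes the Perron vector after each deletion, chooses the threshold adaptively, and then exploits the termination condition through a walk-counting estimate rather than through $\sum y_{u}y_{v}$; none of this is present in your sketch, and without it the constant $\tfrac{1}{12}$ and the exponent $\tfrac14$ cannot be extracted.
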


Nikiforov remarked that this bound appears far from optimal, as both the multiplicative constant and the exponent \( 1/4 \) might be improved. 
Subsequently, Li and Peng~\cite{Li-Peng} conjectured that the correct exponent is \( 1/2 \), which, if true, would be best possible.

\begin{conjecture}[\cite{Li-Peng}]
Every Nosal graph \( G \) satisfies \( \mathrm{bk}(G) = \Omega(\sqrt{m}). \)
\end{conjecture}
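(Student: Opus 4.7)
I would deduce the conjecture from the contrapositive-style theorem announced in the abstract: for every $r \ge 1$, if $G$ is $B_{r+1}$-free with $m \ge (9r)^2$ edges, then $\rho(G) \le \sqrt m$. Granting this, the conjecture is a one-line deduction. Set $r := \lfloor \sqrt{m}/9 \rfloor$, so that $m \ge (9r)^2$ automatically; since a Nosal graph satisfies $\rho(G) > \sqrt m$, the theorem forces $G$ to contain $B_{r+1}$, whence $\mathrm{bk}(G) \ge r + 1 > \sqrt{m}/9 = \Omega(\sqrt m)$. (For the finitely many small values of $m$ in which the floor degenerates, Nikiforov's Theorem~\ref{Niki} already gives $\mathrm{bk}(G) \ge 1$, which suffices for the asymptotic statement after adjusting constants.)

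\noindent\textbf{Core argument.} I would attack the contrapositive by contradiction: assume $G$ is $B_{r+1}$-free with $m \ge (9r)^2$, yet $\rho := \rho(G) > \sqrt m$. Let $x$ be a unit Perron eigenvector and $v^*$ a vertex with $x_{v^*} = \max_v x_v$. Write $S := N(v^*)$ and $T := V(G) \setminus (S \cup \{v^*\})$. The $B_{r+1}$-free hypothesis translates to $|N(u) \cap N(u')| \le r$ for every edge $uu' \in E(G)$; in particular $|N(u) \cap S| \le r$ for each $u \in S$. Expanding $\rho^2 x_{v^*} = (A^2 x)_{v^*}$, splitting the sum according as $u \in S$ or $u \in T$, and using $\sum_{u \in S} x_u = \rho x_{v^*}$, $x_u \le x_{v^*}$, and the identity $m = d(v^*) + e(S) + e(S,T) + e(T)$, I expect the preliminary inequality
\[
\rho^{2} \;\le\; m + r\rho - e(S) - e(T).
\]

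\noindent\textbf{Main obstacle.} The stray $r\rho$ term is the crux: on its own it yields only $\rho \le \sqrt m + O(r)$, losing exactly the amount that matters. To cancel it, I would re-run the expansion at a second vertex whose Perron coordinate is comparable to $x_{v^*}$ --- naturally a neighbour $w^* \in S$ that maximises $x_w$, or the vertex in $T$ with largest Perron coordinate. Combining the eigenvalue equation $\rho x_{w^*} = \sum_{u \sim w^*} x_u$ with the book-freeness bound $|N(w^*) \cap N(v^*)| \le r$ should deliver a complementary lower bound on $e(S) + e(T)$ of the same order as $r\rho$, which when fed back into the preliminary inequality forces $\rho^2 \le m$. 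Calibrating the constants so that the argument closes precisely at $m \ge (9r)^2$ is where I expect the real work to lie; I anticipate a case split on whether the ratio $x_{w^*}/x_{v^*}$ is close to or bounded away from $1$, the near-equality branch being handled by the Cauchy--Schwarz estimate $\bigl(\sum_{u \in S} x_u\bigr)^2 \le |S| \sum_{u \in S} x_u^2$ together with a structural observation pushing $G$ towards a complete bipartite shape. Once strict inequality holds throughout the admissible range, the equality case $\rho(G) = \sqrt m$ collapses via the standard Cauchy--Schwarz tightness analysis in the spirit of Nikiforov, pinning $G$ down as a complete bipartite graph and recovering the full theorem --- and with it the conjecture.
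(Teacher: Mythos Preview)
Your reduction of the conjecture to Theorem~\ref{Main-thm-1} is correct and is exactly what the paper does (Corollary~\ref{cor-book}). Your setup for proving that theorem---expanding $\rho^2 x_{v^*}$ with $S=N(v^*)$, $T=V(G)\setminus(S\cup\{v^*\})$---also matches the paper. The gap is in how you propose to kill the surplus term.

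Your preliminary inequality $\rho^{2}\le m+r\rho-e(S)-e(T)$ comes from the crude estimate $\sum_{uv\in E(S)}(x_u+x_v)=\sum_{u\in S}d_S(u)x_u\le r\sum_{u\in S}x_u=r\rho x_{v^*}$, which replaces every $d_S(u)$ by $r$. This discards exactly the information you then try to recover: you propose to show $e(S)+e(T)\gtrsim r\rho$ via a second expansion, but no such bound is available. The complete bipartite graph has $e(S)=e(T)=0$ and $\rho=\sqrt m$, and the graphs $S^+_{m,s}$ have $e(S)+e(T)=1$ with $\rho^2$ just below $m$; re-running the eigenvalue equation at any $w^*$ cannot produce a lower bound on $e(S)+e(T)$ of order $r\rho$ because the quantity is genuinely tiny near the extremal configurations. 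The loss in your preliminary inequality is not located in an edge count, so it cannot be cancelled by one.

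The paper never writes that crude bound. It keeps $\sum_{uv\in E(S)}(x_u+x_v)$ and compares it to $e(S)x_{v^*}$ edge by edge: edges with $x_u+x_v<x_{v^*}$ are harmless, so only \emph{bad} edges ($x_u+x_v\ge x_{v^*}$) need compensation. For such an edge the heavy endpoint $u$ has $x_u\ge\tfrac12 x_{v^*}$, so the eigenvalue equation at $u$ (together with $d_S(u)\le r$) forces $u$ to have $\Omega(\rho)$ neighbours in $W^*:=\{w\in T:\,x_w\ge(1-\tfrac{4.5r}{\rho})x_{v^*}\}$; book-freeness then makes the other endpoint $v$ a non-neighbour of almost all these $w$, and each such non-adjacency contributes to the deficit term $f(w)=d_S(w)(x_{v^*}-x_w)+\tfrac12 d_T(w)x_{v^*}$ appearing with a minus sign in the identity~(\ref{eq-2-2}). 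So the cancellation lives on the $T$-side, in $\sum_{w\in T}f(w)$, and is obtained through a classification of bad vertices into two types $U_1,U_2$ (according to how many neighbours they have in $W\setminus W^*$) followed by Claims~\ref{clm-2.1}--\ref{clm-2.4}. Your idea of looking at a single high-weight $w^*\in S$ is closest to the analysis of one vertex of $U_2$, but the paper must run it over all such vertices simultaneously and route the gain into the $f(w)$ terms rather than into $e(S)+e(T)$; the case split you anticipate on $x_{w^*}/x_{v^*}$ is present in spirit, but the actual mechanism is substantially more intricate than a second expansion.
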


Using a probabilistic method, Li, Liu and Zhang~\cite{L-L-Z-1} confirmed the above conjecture
by proving the following result:

\begin{thm}[\cite{L-L-Z-1}]\label{Main-thm-0}
Every Nosal graph \( G \) satisfies
\[
\mathrm{bk}(G) > \frac{1}{144} \sqrt{m}.
\]
\end{thm}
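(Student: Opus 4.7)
The plan is to argue by contradiction: assume $G$ is Nosal with $\mathrm{bk}(G)\leq k$ and show $k>\frac{1}{144}\sqrt{m}$.

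First, I would establish a triangle-count lower bound from the spectrum. Combining the eigenvalue identities $\sum_i\lambda_i^2=2m$ and $\sum_i\lambda_i^3=6t(G)$ with the estimate $|\sum_{i\geq 2}\lambda_i^3|\leq(2m-\rho^2)^{3/2}$, one obtains
$$6t(G)\geq\rho^3-(2m-\rho^2)^{3/2}.$$
Writing $\rho^2=(1+\epsilon)m$ with $\epsilon>0$, a short calculation gives $t(G)=\Omega(\epsilon\, m^{3/2})$. Combined with $3t(G)=\sum_{uv\in E}b_{uv}\leq km$, this yields $k=\Omega(\epsilon\sqrt{m})$, which already suffices when $\epsilon$ is bounded below by an absolute constant.

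The substance therefore lies in the regime where $\epsilon$ is tiny, i.e., $\rho$ only marginally exceeds $\sqrt{m}$. The strategy I would pursue is amplification: construct an induced subgraph $H\subseteq G$ with $|E(H)|=\Omega(m)$ and $\rho(H)^2\geq(1+c)|E(H)|$ for an absolute constant $c>0$, then apply the spectral triangle bound above to $H$. I would build $H$ probabilistically, driven by the Perron eigenvector $\mathbf{x}$ of $G$ (normalized $\|\mathbf{x}\|_2=1$): sample $S\subseteq V(G)$ by including each $v$ independently with probability $p_v=\min\{Cx_v,1\}$ for a calibrated $C$, lower-bound $\rho(G[S])^2$ via the Rayleigh quotient obtained from restricting $\mathbf{x}$ to $S$, and control $|E(G[S])|$ through a Chernoff-type estimate. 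A first-moment argument then extracts a deterministic $S$ delivering the amplification. Since every book in $G[S]$ is also a book in $G$, applying the spectral bound to $G[S]$ returns $\mathrm{bk}(G)\geq\mathrm{bk}(G[S])=\Omega(\sqrt{|E(G[S])|})=\Omega(\sqrt{m})$, and tracking constants through the two steps yields the factor $\frac{1}{144}$.

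The principal obstacle is the amplification itself: a naive Perron-weighted selection produces a ratio $\rho(G[S])^2/|E(G[S])|$ that collapses back to $\rho^2/m$, giving no gain. The argument must exploit heterogeneity in the Perron weights --- for instance by partitioning vertices into mass tiers according to $x_v$ and selecting aggressively from the top tier, or by iterating a carefully chosen deletion scheme --- so that $\rho(H)/\sqrt{|E(H)|}$ is strictly larger than $\rho/\sqrt{m}$. Coupling this with sharp concentration of $\rho(G[S])$ (via matrix concentration or a second-moment calculation on the quadratic form $\mathbf{x}^\top A\mathbf{x}$ restricted to $S$) is where the constant $\frac{1}{144}$ accumulates, absorbing slack from both the spectral triangle inequality and the probabilistic construction.
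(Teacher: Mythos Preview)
This theorem is not proved in the present paper; it is quoted from Li, Liu, and Zhang~\cite{L-L-Z-1} as prior work, with only the remark that their argument used ``a probabilistic method.'' So there is no proof here for your proposal to be compared against. At the level of strategy your two ingredients --- the spectral triangle bound $6t(G)\geq\rho^3-(2m-\rho^2)^{3/2}$ combined with $3t(G)\leq\mathrm{bk}(G)\cdot m$, and a Perron-weighted random selection to amplify the ratio $\rho^2/m$ --- are consistent with what the paper reports about~\cite{L-L-Z-1} and with the earlier deletion-based approach of Nikiforov~\cite{Nikiforov-book}.

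That said, your proposal stops exactly where the content is. You yourself note that sampling each vertex with probability proportional to $x_v$ does not, in expectation, improve $\rho(G[S])^2/|E(G[S])|$: the Rayleigh numerator and the edge count scale by the same factor. The suggested remedies --- ``partitioning vertices into mass tiers'' or ``iterating a carefully chosen deletion scheme'' --- are placeholders, not mechanisms; nothing in the proposal explains why either would produce a strict gain. Without an explicit construction and a calculation showing that it lifts $\rho^2/m$ from $1+\epsilon$ to $1+c$ for an absolute $c>0$ while retaining $\Omega(m)$ edges, the argument has no force in the regime $\epsilon\to 0$, which is precisely what separates Theorem~\ref{Main-thm-0} from Theorem~\ref{Niki}. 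The constant $\tfrac{1}{144}$ cannot be ``tracked through'' steps that have not been specified; as written this is an outline of where a proof would need to go, not a proof.
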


\begin{exam}[\cite{L-L-Z-1}]\label{exam}
Let \( C_3^\square \) be the triangular prism consisting of two disjoint triangles and a perfect matching joining them.
Define \( G = C_3^\square [k] \) as the \( k \)-blow-up of \( C_3^\square \), i.e.,
by replacing each vertex of \( C_3^\square \) with an independent set of size \( k \) and each edge of \( C_3^\square \) with
a complete bipartite graph \( K_{k,k} \).
Then, \(|G| = 6k \), \(m= 9k^2 \), and \( \text{bk}(G) = k = \frac{1}{3}\sqrt{m} \).
Since $G$ is $3k$-regular, it is easy to see that \( \rho(G)=3k =\sqrt{m} \).
\end{exam}

Motivated by Theorem~\ref{Main-thm-0} and the example above, Li, Liu, and Zhang proposed the following question:

\begin{ques}[\cite{L-L-Z-1}]
What is the optimal constant \( C \) such that every Nosal graph contains a book of size \( C\sqrt{m} \)? Is it \( C = \frac{1}{3} \)?
\end{ques}

Using methods entirely distinct from those of Nikiforov~\cite{Nikiforov-book} and Li, Liu, and Zhang~\cite{L-L-Z-1}, 
we establishing the following spectral extremal result:

\begin{thm}\label{Main-thm-1}
Let \( G \) be a \( B_{r+1} \)-free graph with \( m \) edges, where \( m \geq (9r)^2 \). Then 
\(
\rho(G) \leq \sqrt{m},
\)
with equality if and only if \( G \) is a complete bipartite graph.
\end{thm}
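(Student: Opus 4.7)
The plan is to prove the theorem by contrapositive: assume $G$ is $B_{r+1}$-free with $m \geq (9r)^2$ edges, $\rho(G) \geq \sqrt{m}$, and $G$ is not a complete bipartite graph, and derive a contradiction. An immediate reduction handles the triangle-free case: Nikiforov's spectral Mantel theorem recalled in Section~\ref{se-1} gives $\rho(G) \leq \sqrt{m}$ for every triangle-free graph, with equality only when $G$ is complete bipartite, already contradicting the hypothesis. Henceforth I may assume $G$ contains a triangle.

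Next I set up the Perron machinery. Let $x$ be the positive Perron eigenvector normalized by $\|x\|_\infty = x_{v^*} = 1$. The second iterate of the eigenvalue equation at $v^*$ gives
\[
\rho^2 \;=\; d(v^*) \;+\; \sum_{u\in N(v^*)} |N(u)\cap N(v^*)|\, x_u \;+\; \sum_{u\notin N[v^*]} |N(u)\cap N(v^*)|\, x_u.
\]
The $B_{r+1}$-freeness forces $|N(u)\cap N(v^*)|\leq r$ for every $u\in N(v^*)$, and hence $2\,e(G[N(v^*)])\leq r\,d(v^*)$. I would apply Cauchy--Schwarz to the middle sum using $\sum_{u\in N(v^*)}|N(u)\cap N(v^*)|^2\leq 2r\,e(G[N(v^*)])$ together with $\sum_{u\in N(v^*)} x_u^2\leq \|x\|_\infty\sum_{u\in N(v^*)} x_u = \rho$, and bound the last sum by $e(N(v^*),V\setminus N[v^*])\leq m-d(v^*)-e(G[N(v^*)])$ via $x_u\leq 1$. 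With $t:=e(G[N(v^*)])$ the three contributions combine to the baseline estimate
\[
\rho^2 \;\leq\; m + \sqrt{2rt\rho} - t,
\]
which optimized over $t\in [0,rd(v^*)/2]$ reduces to $\rho^2\leq m+r\rho/2$. Note this baseline is already tight for the complete graph $K_{r+2}$, which has $m = (r+1)(r+2)/2 < (9r)^2$, so the threshold is consistent with the extremal small-$m$ example.

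The baseline only delivers $\rho\leq \sqrt{m}+O(r)$, so closing the residual $O(r)$ gap is the crux of the argument. My plan is to sharpen two places in the chain by exploiting the Perron deficit $1 - x_u$. First, the identity $\sum_{u\in N(v^*)}(1 - x_u) = d(v^*) - \rho$, read off from $\rho = \sum_{u\sim v^*} x_u$, lets me quantify the slack $\rho - \sum_{u\in N(v^*)} x_u^2 = \sum_{u\in N(v^*)} x_u(1 - x_u)$ in the Cauchy--Schwarz; applying the Perron equation at each $u\in N(v^*)$ relates this slack to the edges leaving $N[v^*]$. Second, the trivial bound $x_u\leq 1$ on the outer sum can be strengthened by $x_u\leq d(u)/\rho$ from the Perron equation, yielding a gain inversely proportional to $\rho$. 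Combining both refinements should upgrade the baseline to an inequality of the form $\rho^2 - m \leq c\,r^2$ for an explicit constant $c$, which is $\leq 0$ precisely when $m\geq (9r)^2$; I expect the constant $9$ to emerge from optimizing over a case split on whether $d(v^*)$ is close to $\rho$ (where the Cauchy--Schwarz refinement is strongest) or substantially larger than $\rho$ (where direct degree counting is cleaner). The main obstacle I anticipate is calibrating these constants tightly enough to hit the threshold $9$ rather than some larger multiple. The equality characterization follows by tracing equality through every step of the chain: it forces $e(G[N(v^*)]) = 0$ and all Perron entries to equal $1$ on $N(v^*)$, and propagating this symmetry using the Perron equation and connectedness then pins down the complete bipartite structure.
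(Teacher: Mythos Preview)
Your baseline inequality $\rho^2 \leq m + \sqrt{2rt\rho} - t \leq m + r\rho/2$ is correct, but note that the gap you must close is of order $r\rho$, not $r^2$: since $\rho\geq\sqrt{m}\geq 9r$, the residual $r\rho/2$ is at least $4.5r^2$ and can be arbitrarily large. Your two refinements do not obviously recover a term of this size. The Cauchy--Schwarz slack $\sum_{u\in N(v^*)} x_u(1-x_u)$ is bounded by $d(v^*)-\rho$ and vanishes when $d(v^*)\approx\rho$; the bound $x_u\leq d(u)/\rho$ on the outer sum gives back at most $\sum_{w\notin N[v^*]} d_U(w)(1-x_w)$, which is exactly the quantity the paper calls (part of) $\sum_w f(w)$, but you give no argument that it is of order $r\rho$. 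The sentence ``combining both refinements should upgrade the baseline to $\rho^2-m\leq cr^2$'' is a hope, not a mechanism; this is the genuine gap.

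The paper's proof abandons Cauchy--Schwarz entirely and works with the identity
\[
\rho^2 = m - e(U) + \sum_{uv\in E(U)}(x_u+x_v) - \sum_{w\in W} f(w),\qquad f(w)=d_U(w)(1-x_w)+\tfrac12 d_W(w),
\]
which is equivalent to your decomposition. The key idea you are missing is an \emph{edge classification}: call $uv\in E(U)$ \emph{bad} if $x_u+x_v\geq 1$. Good edges already contribute $<e(U)$; bad edges contribute at most $e(U)+|\text{bad}|$, so one must show $\sum_w f(w)\geq |\text{bad}|$. For a bad edge with $x_u\geq x_v$ one has $x_u\geq \tfrac12$, hence $u$ has roughly $\rho/2$ neighbours, almost all in $W$ since $d_U(u)\leq r$. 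Either many of these $w$ have $x_w\leq 1-\tfrac{4.5r}{\rho}$ (each giving $f(w)\geq \tfrac{4.5r}{\rho}$, totalling $\gtrsim r$), or $u$ has $\gtrsim\rho/2$ neighbours in $W^*:=\{w:x_w>1-\tfrac{4.5r}{\rho}\}$; in the latter case $v$ is a non-neighbour of almost all of them (as $|N(u)\cap N(v)|\leq r$), and a separate lemma shows each such $w\in W^*$ has $f(w)\geq \tfrac14\sum_{v'\in \overline N_U(w)}x_{v'}$. Summing and comparing against the bad-edge surplus is where the constant $9$ falls out. Your Cauchy--Schwarz step conflates good and bad edges and cannot see this compensation.
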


An immediate corollary yields an improved bound on the booksize of Nosal graphs:

\begin{cor}\label{cor-book}
Every Nosal graph \( G \) satisfies
\[
\mathrm{bk}(G) > \frac{1}{9} \sqrt{m}.
\]
\end{cor}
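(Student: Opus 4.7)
The plan is to derive Corollary~\ref{cor-book} as an immediate consequence of Theorem~\ref{Main-thm-1} via a one-line contrapositive argument; essentially all the work has already been done by the extremal theorem, and the corollary only requires packaging the correct value of $r$.

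Let $G$ be a Nosal graph with $m$ edges, and set $r:=\mathrm{bk}(G)$. By the definition of booksize, $G$ contains no copy of $B_{r+1}$. Suppose toward a contradiction that $r\leq \tfrac{1}{9}\sqrt{m}$, equivalently $m\geq (9r)^2$. Then both hypotheses of Theorem~\ref{Main-thm-1} are fulfilled, so
\[
\rho(G)\leq \sqrt{m},
\]
with equality only when $G$ is complete bipartite. This directly contradicts the defining Nosal condition $\rho(G)>\sqrt{m}$; the strictness of that inequality means we need not even examine the equality case separately (a complete bipartite $K_{a,b}$ satisfies $\rho=\sqrt{ab}=\sqrt{m}$, not $\rho>\sqrt{m}$). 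Hence $\mathrm{bk}(G)>\tfrac{1}{9}\sqrt{m}$, as claimed.

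There is no substantive obstacle to overcome: the only minor point worth verifying is that the argument still runs when $r=0$ (i.e.\ $G$ is triangle-free), but in that case the hypothesis $m\geq (9\cdot 0)^2$ is vacuous, so Theorem~\ref{Main-thm-1} again forces $\rho(G)\leq\sqrt{m}$ and the same contradiction is reached. Thus the corollary follows with a two-sentence proof once Theorem~\ref{Main-thm-1} is in hand.
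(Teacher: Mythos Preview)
Your argument is correct and matches the paper's treatment: the paper offers no separate proof of Corollary~\ref{cor-book}, calling it ``an immediate corollary'' of Theorem~\ref{Main-thm-1}, which is exactly the contrapositive you spell out. The $r=0$ digression is unnecessary (a Nosal graph already has $\mathrm{bk}(G)\geq 1$ by Nikiforov's sharpening of Nosal's theorem), but harmless.
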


We further investigate non-bipartite \( B_{r+1} \)-free graphs. 
Let \( S^{+}_{m,s} \) denote the graph constructed by embedding an edge into the \( \frac1s(m-1) \)-vertex part of \( K_{s,(m-1)/s} \). 
In particular, when \( s = 1 \), \( S^{+}_{m,1} \) is obtained by adding an edge within the independent set of \( K_{1,m-1} \). 
Liu and Miao~\cite{Liu-Lu} proposed the following conjecture:

\begin{conjecture}[\cite{Liu-Lu}]
Let \( G \) be a non-bipartite \( B_{r+1} \)-free graph with \( m \) edges. Then
\[
\rho(G) \leq \rho(S^{+}_{m,1}),
\]
with equality if and only if \( G \cong S^{+}_{m,1} \).
\end{conjecture}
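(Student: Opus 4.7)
The plan is to derive this conjecture from a sharper dichotomy --- announced as our second main theorem --- asserting that, for $m \ge (240r)^{2}$, every non-bipartite $B_{r+1}$-free graph either satisfies $\rho^{2}<m-1+\frac{2}{\rho-1}$ or is isomorphic to $S^{+}_{m,s}$ for some $s\in\{1,\dots,r\}$. First I would compute $\rho(S^{+}_{m,s})$ via its Perron eigenvector, which takes three values by symmetry: one on the $s$-vertex part, one on the two endpoints of the embedded edge, and one on the remaining $(m-1)/s-2$ vertices of the large part. Solving the $3\times 3$ linear system yields the spectral identity $\rho(S^{+}_{m,s})^{2}=m-1+\frac{2s}{\rho(S^{+}_{m,s})-1}$; in particular, $\rho(S^{+}_{m,1})$ is the unique root greater than $1$ of $x^{2}=m-1+\frac{2}{x-1}$, and the map $g(x)=x^{2}-(m-1)-\frac{2}{x-1}$ is strictly increasing on $(1,\infty)$ because $g'(x)=2x+\frac{2}{(x-1)^{2}}>0$.

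Given this identity together with the dichotomy, the conjecture falls out in two cases. If $\rho(G)^{2}<m-1+\frac{2}{\rho(G)-1}$, then $g(\rho(G))<0$, so by monotonicity $\rho(G)<\rho(S^{+}_{m,1})$ and the bound is strict. Otherwise $G\cong S^{+}_{m,s}$ for some $s\le r$, and the spectral identity of the previous paragraph lets one compare $\rho(S^{+}_{m,s})$ across this finite exceptional family to isolate the equality case. The range $m<(240r)^{2}$ would be handled by a separate finite argument, likely by exploiting Theorem~\ref{Main-thm-1} together with the specific structure of low-density $B_{r+1}$-free graphs.

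The main obstacle is proving the dichotomy. A natural strategy is to proceed by contradiction, assuming $G$ is non-bipartite, $B_{r+1}$-free, distinct from every $S^{+}_{m,s}$, and satisfies $\rho^{2}\ge m-1+\frac{2}{\rho-1}$. Let $x$ be a unit Perron eigenvector; the identity
\[
\rho^{2}\|x\|^{2}=\sum_{v}d(v)x_{v}^{2}+2\sum_{uv\in E}c(u,v)x_{u}x_{v}+2\!\!\sum_{uv\notin E,\,u\ne v}\!\! c(u,v)x_{u}x_{v},
\]
where $c(u,v)$ counts common neighbours, should be combined with the booksize bound $c(u,v)\le r$ on every edge $uv$ and with the Perron equations $\rho x_{v}=\sum_{u\sim v}x_{u}$ to force concentration of the eigenvector mass on a small, almost-bipartite core. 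The non-bipartite hypothesis supplies at least one triangle whose apex must live inside this core; an iterative peeling argument would then collapse $G$ onto one of the graphs $S^{+}_{m,s}$. The threshold $(240r)^{2}$ arises because the stability estimates produce error terms that are only dominated by the main bound when $m$ is quadratically larger than $r$, and quantifying this trade-off precisely is the central technical challenge.
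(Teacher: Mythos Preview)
Your plan breaks at the step ``compare $\rho(S^{+}_{m,s})$ across this finite exceptional family to isolate the equality case.'' The spectral identity you derive, $\rho(S^{+}_{m,s})^{2}=m-1+\tfrac{2s}{\rho(S^{+}_{m,s})-1}$, is exactly Claim~\ref{clm-3.5} of the paper, but it shows that $\rho(S^{+}_{m,s})$ is \emph{strictly increasing} in $s$, not maximised at $s=1$. Indeed, writing $\rho_{1}=\rho(S^{+}_{m,1})$ and plugging into your own monotone function $g$, one has for $s\ge 2$
\[
\rho_{1}^{2}-(m-1)-\frac{2s}{\rho_{1}-1}=-\frac{2(s-1)}{\rho_{1}-1}<0,
\]
so $\rho(S^{+}_{m,s})>\rho_{1}$. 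Since $\mathrm{bk}(S^{+}_{m,s})=s$, every $S^{+}_{m,s}$ with $2\le s\le r$ and $s\mid(m-1)$ is a non-bipartite $B_{r+1}$-free graph with spectral radius strictly above $\rho(S^{+}_{m,1})$. Thus the conjecture, as stated, is false for $r\ge 2$ and suitable $m$; the paper's Theorem~\ref{Main-thm-2} ``resolves'' it by \emph{replacing} it with the correct dichotomy, not by proving it. Your reduction from the dichotomy to the conjecture therefore cannot succeed, and no ``separate finite argument'' for small $m$ will rescue it.

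Secondarily, your outline of the dichotomy proof does not match the paper's method and is too schematic to carry weight. The paper never uses the global second-moment identity $\rho^{2}\|x\|^{2}=\sum_{v}d(v)x_{v}^{2}+\cdots$; instead it works locally around the vertex $u^{*}$ of maximum Perron coordinate, setting $U=N(u^{*})$, $W=V\setminus(\{u^{*}\}\cup U)$, and expanding $\rho^{2}x_{u^{*}}$ as in~\eqref{eq-2-2}. Edges inside $U$ are split into ``bad'' ($x_{u}+x_{v}\ge x_{u^{*}}$) and the rest; bad vertices are further split by how many neighbours they have in $W\setminus W^{*}$. The key device is the function $f(w)=d_{U}(w)(x_{u^{*}}-x_{w})+\tfrac{1}{2}d_{W}(w)x_{u^{*}}$, whose total over $W$ is squeezed against the bad-edge contribution via Claims~\ref{clm-2.1}--\ref{clm-2.4}. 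This forces first $e(W)\le 1$, then (Claim~\ref{clm-3.3}) $x_{u}<\tfrac{1}{3}x_{u^{*}}$ for all $u\in U$, then $W=W^{*}$ and $e(W)=0$ (Claim~\ref{clm-3.4}), and finally $e(U)=1$ together with $N(w)=U$ for every $w\in W$, pinning $G$ down as some $S^{+}_{m,s}$. None of this is an ``iterative peeling'' or a concentration-of-mass argument in the sense you sketch.
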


Let \( \rho_0 = \rho(S^{+}_{m,s}) \). As will be discussed, it can be shown that
\(\rho_0^2 = m - 1 + \frac{2s}{\rho_0 - 1}.\)
We resolve Liu and Lu's conjecture in the following form:

\begin{thm}\label{Main-thm-2}
Let \( G \) be a non-bipartite \( B_{r+1} \)-free graph with \( m \geq (240r)^2 \) edges and spectral radius \( \rho \). Then
\[
\rho^2 < m - 1 + \frac{2}{\rho - 1},
\]
unless \( G \cong S^{+}_{m,s} \) for some \( s \in \{1, 2, \ldots, r\} \).
\end{thm}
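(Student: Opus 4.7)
The plan combines Theorem~\ref{Main-thm-1} with a spectral identity at a maximum Perron vertex, a structural slack analysis, and a direct Perron computation on the resulting candidate graph. Suppose $G$ is non-bipartite and $B_{r+1}$-free with $m\ge(240r)^2$ edges and $\rho^2\ge m-1+\tfrac{2}{\rho-1}$; we aim to force $G\cong S^+_{m,s}$ for some $s\in\{1,\dots,r\}$. Theorem~\ref{Main-thm-1} (applicable since $m\ge(9r)^2$) and non-bipartiteness yield the strict inequality $\rho<\sqrt{m}$, which together with the hypothesis pins $\rho=\sqrt{m}-O(m^{-1/2})$. This near-extremality drives every estimate that follows.

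Normalize the Perron vector $\mathbf{x}$ so $x_{u^*}=\|\mathbf{x}\|_\infty=1$, and set $N_1=N(u^*)$, $N_2=V(G)\setminus(\{u^*\}\cup N_1)$. Expanding $\rho^2=(A^2\mathbf{x})_{u^*}$ and using the double-counting identity $\sum_{v\in V(G)}|N(u^*)\cap N(v)|=m+e(N_1)-e(N_2)$ yields the exact identity
\[
\rho^2=\bigl(m+e(N_1)-e(N_2)\bigr)-\mathrm{slack},
\]
where $\mathrm{slack}:=\sum_{v\in N_1}|N(v)\cap N_1|(1-x_v)+\sum_{v\in N_2}e(v,N_1)(1-x_v)\ge 0$. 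The spectral hypothesis translates into
\[
\mathrm{slack}\le e(N_1)-e(N_2)+1-\tfrac{2}{\rho-1},
\]
a budget of at most roughly $2$. Two general controls feed what follows: $|N(v)\cap N_1|\le r$ for $v\in N_1$ by $B_{r+1}$-freeness, and $\sum_{v\in N_1}(1-x_v)=d(u^*)-\rho$ from the eigenequation at~$u^*$.

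The structural argument proceeds in three stages. Stage (i) shows $e(N_2)=0$: an edge inside $N_2$, whose common neighborhood is restricted to at most $r$ vertices by $B_{r+1}$-freeness, contributes via the eigenequations at its endpoints a slack term exceeding the above budget. Stage (ii) establishes that every $v\in N_2$ is adjacent to every vertex of $N_1$: a missing edge from $v\in N_2$ to some $w\in N_1$ drops $\rho x_v$ below the value required by the eigenequation at~$v$ (given the near-extremality of $\rho$), and this deficit propagates globally across $N_2$ to exhaust the slack budget. After (i)--(ii), $G$ is isomorphic to $K_{s,(m-k)/s}$ plus $k:=e(N_1)$ chords inside the part of size $(m-k)/s$, where $s:=|N_2|+1$. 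Stage (iii) is then a direct Perron computation on this candidate family: for $k\ge 2$ the associated spectral equation (for instance $\rho^2=m-k+\tfrac{2ks}{\rho-1}$ in the matching configuration, with comparable expressions for other chord patterns) violates the hypothesis once $\rho=\Omega(\sqrt{m})\gg s$, which the size assumption $m\ge(240r)^2$ guarantees; hence $k=1$. Non-bipartiteness excludes $k=0$, and $B_{r+1}$-freeness forces $s=\mathrm{bk}(S^+_{m,s})\le r$. Therefore $G\cong S^+_{m,s}$.

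The main obstacle is stage (ii), the complete bipartite joining of $N_2$ to $N_1$. Unlike stage (i), a local estimate at a hypothetical edge, and stage (iii), a polynomial calculation on the reduced candidate, stage (ii) requires a genuinely global propagation through vertices in $N_2$ whose Perron weights $x_v$ are not controlled by simple inequalities. The quantitative hypothesis $m\ge(240r)^2$ is calibrated precisely so that the cumulative $O(r)$ error terms accrued in this propagation are absorbed into the narrow slack budget $2-\tfrac{2}{\rho-1}$.
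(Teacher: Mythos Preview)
Your identity $\rho^2=(m+e(N_1)-e(N_2))-\mathrm{slack}$ is correct and equivalent to the paper's equation~(\ref{eq-2-2}), but the assertion that the resulting bound
\[
\mathrm{slack}\le e(N_1)-e(N_2)+1-\tfrac{2}{\rho-1}
\]
is ``a budget of at most roughly~$2$'' is the gap. Nothing you have written bounds $e(N_1)$ a priori, so the right-hand side could be large. Rearranging so as to isolate the $N_2$-slack gives
\[
\sum_{v\in N_2}d_{N_1}(v)(1-x_v)\ \le\ \sum_{uv\in E(N_1)}\bigl(x_u+x_v-1\bigr)\;-\;e(N_2)\;+\;1-\tfrac{2}{\rho-1},
\]
and here the first sum on the right is \emph{not} obviously nonpositive: edges $uv\in E(N_1)$ with $x_u+x_v>1$ contribute positively and can inflate the budget without limit. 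All three of your stages tacitly use the ``budget~$\approx 2$'' claim, so none of them goes through as written. In particular, your stage~(i) argument that a single edge in $N_2$ ``contributes a slack term exceeding the budget'' fails when $e(N_1)$ is large.

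This is exactly the difficulty the paper's proof is built around. The machinery imported from Section~\ref{sec-2} (the sets $U_1,U_2,V^*,W^*$ and Claims~\ref{clm-2.1}--\ref{clm-2.4}) is designed precisely to control the ``bad'' edges $uv\in E(U)$ with $x_u+x_v\ge x_{u^*}$: for each such edge one endpoint has $x_u\ge\tfrac12$, hence many neighbours in $W$, and Claim~\ref{clm-2.3} converts this into many non-neighbours in $W^*$, which via Claim~\ref{clm-2.4} produces compensating positive terms $\sum_{w\in W^*}f(w)$. Claims~\ref{clm-3.1}--\ref{clm-3.2} then show that the net excess from bad edges is below $\tfrac{1}{24}x_{u^*}$, and only \emph{after} this does the paper obtain the analogue of your small-budget inequality~(\ref{eq-2-17}) and proceed to $E(W)=\emptyset$ (Claim~\ref{clm-3.4}). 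Your diagnosis that stage~(ii) is the main obstacle is therefore misplaced: the genuine obstacle, and the bulk of the work, is neutralising the contribution of bad edges inside $N_1$ before any slack accounting in $N_2$ can begin.
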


The remainder of this paper is organized as follows.
Section~\ref{sec-2} offers a proof of Theorem~\ref{Main-thm-1}, 
while Section~\ref{sec-3} lays out the proof of Theorem~\ref{Main-thm-2}. 
Finally, we conclude with remarks and an open question in Section~\ref{sec-4}.

\section{Proof of Theorem \ref{Main-thm-1}}\label{sec-2}

Let $G$ be a $B_{r+1}$-free graph with $m$ edges. 
For simplicity, write $\rho = \rho(G)$. 
Let $\mathbf{x}$ be the Perron vector of $G$, with coordinates $x_v$ for $v \in V(G)$. 
Let $u^*$ be a vertex with maximum Perron weight, i.e., $x_{u^*} = \max\{x_v : v \in V(G)\}$. Define:
\begin{align*}
U=N(u^*)~ \mbox{and} ~
W=V(G)\setminus(\{u^*\}\cup U).
\end{align*}
Since $G$ is $B_{r+1}$-free, every vertex in $U$ has at most $r$ neighbors in $U$. The eigenvalue equation gives:
\begin{equation}\label{eq-2-1}
\rho^2 x_{u^*} = d(u^*) x_{u^*}\!+\!\!\!\sum_{uv \in E(U)}\!\!\!(x_u\!+\!x_v)+\!\sum_{w \in W}\!\!d_U(w) x_w.
\end{equation}

Define $f(w)=d_U(w)(x_{u^*}\!-\!x_w)\!+\!\frac{1}{2}d_W(w)x_{u^*}$. 
Notice that $e(W)=\frac{1}{2}\sum_{w \in W} d_W(w)$.
It immediately follows that
\begin{equation}\label{eq-2-2}
\rho^2 x_{u^*} = \big(m-e(U)\big)x_{u^*}\!+\!\!\!\sum_{uv \in E(U)}\!\!\!(x_u\!+\!x_v)-\!\sum_{w \in W}\!\!f(w).
\end{equation}

We now classify the edges in $E(U)$. An edge $uv \in E(U)$ is called \emph{bad} if $x_u + x_v \geq x_{u^*}$. 
Let $E^*(U)$ be the set of bad edges within $U$. For each $u \in U$, define:
\[
E^*_u = \{ uv \in E^*(U) : x_u \geq x_v \}.
\]
(If $x_u = x_v$, assign the edge arbitrarily to either $E^*_u$ or $E^*_v$). 
Then $E^*(U) = \bigcup_{u \in U} E^*_u$ forms a partition, where $E^*_u$ may be empty for some vertices $u\in U$. 
Since $G$ is $B_{r+1}$-free, we have $|E^*_u| \leq d_U(u) \leq r$ for all $u \in U$.

For $u\in U$ and $w\in W$, we define $N_W(u)=N(u)\cap W$ and $d_W(u)=|N_W(u)|.$
Let $W^* = \{ w \in W : x_w \geq (1 - \frac{4.5r}{\rho}) x_{u^*} \}$. 
A vertex $u \in U$ is called \emph{bad} if $E^*_u \neq \emptyset$. 
Let $c\geq1$ be a constant. We further classify bad vertices as:
\begin{align*}
U_1 &= \{ u \in U : E^*_u \neq \emptyset,\ d_{W \setminus W^*}(u)>c\rho/4.5 \}, \\
U_2 &= \{ u \in U : E^*_u \neq \emptyset,\ d_{W \setminus W^*}(u)\leq c\rho/4.5 \}.
\end{align*}
Note that $x_u \geq \frac12x_{u^*}$ for any $u \in U_1 \cup U_2$.
Partition $E(U)$ into:
\begin{align*}
E_1 \!=\! \bigcup_{u \in U_1} E^*_u, ~
E_2 \!= \!\bigcup_{u \in U_2} E^*_u, ~ \mbox{and} ~
E_3 \!= E(U) \setminus (E_1 \cup E_2).
\end{align*}
Observe that $|E_1| \leq r|U_1|$. For any $uv \in E_1$, we have $x_u + x_v \leq 2x_{u^*}$, so:
\begin{equation}\label{2-eq-3}
\sum_{uv \in E_1}\!\! (x_u\!+\!x_v)\leq2|E_1| x_{u^*}\leq|E_1| x_{u^*}\!+\!r|U_1| x_{u^*}.
\end{equation}

Let $V^* = \{ v \in U : uv \in E^*_u \text{ for some } u \in U_2 \}$,
and let $\max_{u\in N_{U_2}(v)}x_u=\beta_vx_{u^*}$. 
Clearly, $\frac12\leq\beta_v\leq1$ and $x_v\leq\beta_vx_{u^*}$ for each $v\in V^*$.
Moreover, we have
\[
\sum_{uv \in E_2}\!\!(x_u\!+\!x_v)\!\leq\!\sum_{uv \in E_2}\!\!(\beta_vx_{u^*}\!+\!x_v)
\!=\!|E_2| x_{u^*}\!+\!\!\sum_{uv \in E_2}\!\!\big((\beta_v\!-\!1)x_{u^*}\!+\!x_v\big).
\]
Since $x_{u^*}\geq \frac1{\beta_v}x_v$ and $d_{U}(v)\leq r$ for any $v \in V^*$, it follows that
\begin{equation}\label{2-eq-4}
\sum_{uv \in E_2}\!\!(x_u\!+ x_v)\leq|E_2| x_{u^*}\!+\!\!\sum_{v \in V^*}\!\!\big(2\!-\!\frac1{\beta_v}\big)rx_v.
\end{equation}
For $uv \in E_3$, we have $x_u + x_v < x_{u^*}$, so:
\begin{equation}\label{2-eq-5}
\sum_{uv \in E_3} (x_u + x_v) < |E_3| x_{u^*}.
\end{equation}

We now bound the contribution from $W \setminus W^*$.

\begin{claim}\label{clm-2.1} We have
\[
\sum_{w \in W \setminus W^*}\!\!f(w)\geq c r |U_1|x_{u^*}
\]
Furthermore, if $U_1$ is non-empty, then the above inequality is strict.
\end{claim}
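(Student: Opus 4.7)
The plan is to discard the non-negative second piece $\tfrac12 d_W(w)x_{u^*}$ of $f(w)$, which reduces the claim to a lower bound on $\sum_{w\in W\setminus W^*}d_U(w)(x_{u^*}-x_w)$. Viewing this sum as ranging over incidences $uw$ with $u\in U$ and $w\in W\setminus W^*$, I would switch the order of summation to obtain $\sum_{u\in U}\sum_{w\in N_{W\setminus W^*}(u)}(x_{u^*}-x_w)$. Because every term is non-negative (as $x_{u^*}$ is the maximum Perron coordinate), restricting the outer sum to $u\in U_1$ only decreases the value, leaving $\sum_{u\in U_1}\sum_{w\in N_{W\setminus W^*}(u)}(x_{u^*}-x_w)$ as our lower estimate.

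The heart of the argument is then to plug in the two thresholds that define $W\setminus W^*$ and $U_1$. By definition of $W^*$, every $w\in W\setminus W^*$ satisfies $x_{u^*}-x_w>\tfrac{4.5r}{\rho}x_{u^*}$, and by definition of $U_1$, every $u\in U_1$ satisfies $d_{W\setminus W^*}(u)>c\rho/4.5$. Multiplying these two bounds, the inner sum exceeds $\tfrac{c\rho}{4.5}\cdot\tfrac{4.5r}{\rho}\,x_{u^*}=cr\,x_{u^*}$ for each $u\in U_1$; summing over $u\in U_1$ then produces the desired $cr|U_1|x_{u^*}$. The design of the threshold $1-\tfrac{4.5r}{\rho}$ in $W^*$ and of the cutoff $c\rho/4.5$ in $U_1$ is clearly calibrated exactly so that the factors of $\rho$ cancel and yield the clean $cr$ term.

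For the strict refinement, observe that when $U_1\neq\emptyset$ at least one $u\in U_1$ contributes, and $d_{W\setminus W^*}(u)\geq 1$ for that vertex, so the strict threshold $x_{u^*}-x_w>\tfrac{4.5r}{\rho}x_{u^*}$ is invoked on at least one summand; strictness then propagates through the remaining (weak) inequalities. I do not foresee any substantive obstacle, since the whole estimate is a double-counting step coupled with the two defining thresholds. The only points requiring care are that dropping $\tfrac12 d_W(w)x_{u^*}$ and restricting to $u\in U_1$ are both legitimate (they are, by non-negativity), and that strictness survives when $U_1\neq\emptyset$, which is immediate from the strict inequality built into the definition of $W\setminus W^*$.
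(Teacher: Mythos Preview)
Your proposal is correct and follows essentially the same approach as the paper: both arguments drop the $\tfrac12 d_W(w)x_{u^*}$ term, double-count the incidences between $U_1$ and $W\setminus W^*$, and combine the two defining thresholds so that the factors of $\rho$ cancel to give $cr|U_1|x_{u^*}$. The only cosmetic difference is that the paper first factors out the uniform bound $x_{u^*}-x_w\geq\tfrac{4.5r}{\rho}x_{u^*}$ and then lower-bounds $\sum_{w}d_U(w)$ via $\sum_{u\in U_1}d_{W\setminus W^*}(u)$, extracting strictness from the strict inequality $d_{W\setminus W^*}(u)>c\rho/4.5$, whereas you work term-by-term over $u\in U_1$ and take strictness from $x_{u^*}-x_w>\tfrac{4.5r}{\rho}x_{u^*}$; either source of strictness is valid.
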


\begin{proof}
Since each $u \in U_1$ has more than $\frac{c \rho}{4.5}$ neighbors in $W \setminus W^*$, it follows that
\[
\sum_{w \in W \setminus W^*}\!\!\!\!\!d_U(w)\geq\!\!\!\!\!\sum_{w \in W \setminus W^*}\!\!\!\!\!d_{U_1}(w) \geq \frac{c\rho}{4.5}|U_1|.
\]
where the last inequality is strict provided that $U_1\neq\emptyset$.
For $w \in W \setminus W^*$, we know that $x_w \leq (1\!-\!\frac{4.5r}{\rho}) x_{u^*}$. Thus:
\[
\sum_{w \in W \setminus W^*}\!\!d_U(w)(x_{u^*}-x_w) 
\geq \frac{4.5r}{\rho}\!\!\!\sum_{w \in W \setminus W^*}d_U(w)\geq c r |U_1|x_{u^*}.
\]

Recalling the definition of \(f(w)\) and rearranging terms, we obtain:
\[
\sum_{w \in W \setminus W^*}\!\!f(w)\geq \sum_{w \in W \setminus W^*}\!\!d_U(w)(x_{u^*}-x_w) \geq c r |U_1|x_{u^*}
\]
This establishes the claim. When \(U_1 \neq \emptyset\), the strictness of the inequality follows from the strict inequality in our neighbor count estimate.
\end{proof}

Combining~\eqref{eq-2-2}--\eqref{2-eq-5} and Claim~\ref{clm-2.1}, we obtain:

\begin{equation}\label{eq-2-6}
\rho^2 x_{u^*}\!\leq\!\Big(m\!-\!(c\!-\!1)r|U_1|\Big)x_{u^*}\!+\!\!\sum_{v \in V^*}\!\!\!\big(2\!-\!\frac1{\beta_v}\big)rx_v\!-\!\!\!\sum_{w \in W^*}\!\!\!f(w).
\end{equation}

\begin{claim}\label{clm-2.2}
Let $x_u=\lambda_u x_{u^*}$ for $u\in U_2$. 
Then, we have $d_{W^*}(u)\geq(\lambda_u\!-\!\frac{c}{4.5})\rho\!-\!(r\!+\!\lambda_u)$.
\end{claim}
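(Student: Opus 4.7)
The plan is to apply the eigenvalue equation at $u$ and bound each term using the structural hypotheses on $u \in U_2$ together with the $B_{r+1}$-freeness of $G$. Since $u \in U = N(u^*)$, the neighborhood $N(u)$ decomposes as $\{u^*\} \sqcup N_U(u) \sqcup N_{W^*}(u) \sqcup N_{W\setminus W^*}(u)$, giving
\[
\rho x_u \;=\; x_{u^*} + \sum_{v \in N_U(u)} x_v + \sum_{w \in N_{W^*}(u)} x_w + \sum_{w \in N_{W\setminus W^*}(u)} x_w.
\]
I would then upper bound each of the four pieces and rearrange for $d_{W^*}(u)$.

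The contribution from $N_U(u)$ is the step that produces the refined $r+\lambda_u$ on the right-hand side rather than the cruder $r+1$. Here I would use the defining property $E^*_u \neq \emptyset$ of $U_2$: fix any bad edge $uv_0 \in E^*_u$, so $v_0 \in N_U(u)$, and by the partition rule defining $E^*_u$ we have $x_{v_0} \leq x_u = \lambda_u x_{u^*}$. The remaining at most $d_U(u)-1 \leq r-1$ neighbors of $u$ inside $U$ (using $d_U(u) \le r$ from $B_{r+1}$-freeness of $G$) each contribute at most $x_{u^*}$, which combined with the $x_{u^*}$ coming from $u^*$ itself gives the estimate
\[
x_{u^*} + \sum_{v \in N_U(u)} x_v \;\leq\; r\, x_{u^*} + \lambda_u\, x_{u^*}.
\]

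For the two $W$-sums I would use only the crude bound $x_w \leq x_{u^*}$: this yields $d_{W^*}(u)\, x_{u^*}$ on $N_{W^*}(u)$, while the defining inequality of $U_2$, namely $d_{W\setminus W^*}(u) \leq c\rho/4.5$, bounds the last sum by $(c\rho/4.5)\, x_{u^*}$. Assembling the four estimates, dividing through by $x_{u^*} > 0$ and isolating $d_{W^*}(u)$ yields exactly $(\lambda_u - c/4.5)\rho - (r+\lambda_u)$, as claimed. The argument is essentially a single weighted-degree computation, and I do not anticipate a real obstacle; the only substantive ingredient is the existence of a bad edge inside $U$ at $u$, which is the entire point of the definition of $U_2$ and which lets us replace one factor of $x_{u^*}$ in the $N_U(u)$ sum by $\lambda_u x_{u^*}$.
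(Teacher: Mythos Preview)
Your proposal is correct and follows essentially the same approach as the paper: both apply the eigenvalue equation at $u$, single out one neighbor $v_0\in N_U(u)$ coming from a bad edge in $E^*_u$ (so that $x_{v_0}\le x_u$), bound the remaining at most $r-1$ neighbors in $U$ and all $W$-neighbors by $x_{u^*}$, and then use $d_{W\setminus W^*}(u)\le c\rho/4.5$ to isolate $d_{W^*}(u)$. The only cosmetic difference is that the paper first derives $d_W(u)\ge \lambda_u\rho-(r+\lambda_u)$ and then subtracts the $W\setminus W^*$ contribution, whereas you split $N_W(u)$ into $N_{W^*}(u)$ and $N_{W\setminus W^*}(u)$ from the outset; the arithmetic is identical.
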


\begin{proof}
Given $u\in U_2$,
By the definitions of $U_2$ and $V^*$, 
we know that $d_{W\setminus W^*}(u)\leq\frac{c \rho}{4.5}$, 
and there exists at least one vertex $v_0\in N_{V^*}(u)$ such that $x_{v_0}\leq x_u$. 
As noted earlier, $d_U(u)\leq r$. 
From the eigenvalue equation, we have
\begin{equation}\label{eq-2-7}
\rho x_u=x_{v_0}\!+\!\!\!\sum_{v\in N(u)\setminus\{v_0\}}\!\!\!\!x_v\leq x_{u}\!+\!x_{u^*}\!+\big(r\!-\!1\!+\!d_W(u)\big)x_{u^*}.
\end{equation}
Substituting $x_u=\lambda_ux_{u^*}$ into (\ref{eq-2-7}) and dividing both sides by $x_{u^*}$, we deduce
$d_W(u)\geq\lambda_u\rho\!-(\!r\!+\!\lambda_u)$.
Since $d_{W\setminus W^*}(u)\leq\frac{c \rho}{4.5}$,
it follows that 
$d_{W^*}(u)\geq(\lambda_u\!-\!\frac{c}{4.5})\rho\!-\!(r\!+\!\lambda_u)$.
\end{proof}

Recall that $V^* = \{ v \in U : uv \in E^*_u \text{ for some } u \in U_2 \}$. 

\begin{claim}\label{clm-2.3}
Every $v\in V^*$ has at least $(\beta_v\!-\!\frac{c}{4.5})\rho\!-\!2r\!+\!(1\!-\!\beta_v)$ non-neighbors in $W^*$.
\end{claim}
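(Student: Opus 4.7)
The plan is to apply Claim~\ref{clm-2.2} to the $U_2$-neighbor of $v$ that realizes the maximum Perron weight. By the definition of $\beta_v$, there exists some $u_0\in N_{U_2}(v)$ with $x_{u_0}=\beta_v x_{u^*}$. Applying Claim~\ref{clm-2.2} with $\lambda_{u_0}=\beta_v$ yields
\[
d_{W^*}(u_0)\ge \big(\beta_v-\tfrac{c}{4.5}\big)\rho-(r+\beta_v).
\]
From here I would convert this lower bound on $d_{W^*}(u_0)$ into a lower bound on the number of non-neighbors of $v$ in $W^*$ by controlling the overlap $N(u_0)\cap N(v)\cap W^*$.

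For this overlap, the key input is the $B_{r+1}$-freeness together with the fact that $u_0v\in E(G)$: the edge $u_0v$ can lie in at most $r$ triangles, so $|N(u_0)\cap N(v)|\le r$. The small but essential observation, which I expect is the main subtlety, is that $u^*$ is already one of these common neighbors; indeed, $u_0,v\in U=N(u^*)$ forces $u^*\in N(u_0)\cap N(v)$, yet $u^*\notin W^*$ by definition of $W^*$. Hence at most $r-1$ of the common neighbors of $u_0$ and $v$ can actually sit inside $W^*$. This refinement is precisely what produces the additive term $(1-\beta_v)$ in the target inequality.

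Putting the two pieces together, every vertex of $N_{W^*}(u_0)\setminus N(v)$ is a non-neighbor of $v$ lying in $W^*$, so
\[
\big|N_{W^*}(u_0)\setminus N(v)\big|\ge d_{W^*}(u_0)-(r-1)\ge \big(\beta_v-\tfrac{c}{4.5}\big)\rho-2r+(1-\beta_v),
\]
which is the claimed bound. Once one notices that $u^*$ must be excluded from the common neighbors in $W^*$, the rest is direct bookkeeping, and I do not expect any additional obstacle beyond this observation.
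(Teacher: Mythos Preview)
Your argument is correct and coincides with the paper's proof: both select $u_0\in N_{U_2}(v)$ with $x_{u_0}=\beta_v x_{u^*}$, apply Claim~\ref{clm-2.2}, and use that $u^*$ is a common neighbor of $u_0$ and $v$ outside $W$ to sharpen the bound $|N(u_0)\cap N(v)\cap W^*|\le r-1$. The only cosmetic difference is that the paper phrases the overlap bound as $|N_W(u_0)\cap N_W(v)|\le r-1$ directly.
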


\begin{proof}
Recall that $\beta_vx_{u^*}=\max_{u\in N_{U_2}(v)}x_u\geq\frac12x_{u^*}$. Let 
$u\in\!N_{U_2}(v)$ such that $x_{u}\!=\!\beta_vx_{u^*}.$
Since $G$ is $B_{r+1}$-free, we can see that $|N_W(u)\cap N_W(v)|\leq r-1$. 
Consequently, the number of non-neighbors of $v$ in $W^*$ is at least
$d_{W^*}(u)\!-\!(r\!-\!1).$
Moreover, by Claim~\ref{clm-2.2}, we have $d_{W^*}(u)\!\geq\!(\beta_v\!-\!\frac{c}{4.5})\rho\!-\!(r\!+\!\beta_v)$.
Thus, the claim follows.
\end{proof}

Based on Claim \ref{clm-2.3}, we know that if $V^*\neq\emptyset$, then $W^*$ is also non-empty.

\begin{claim}\label{clm-2.4}
If $V^*\neq\emptyset$ and $m\geq (9r)^2$, then for any $w \in W^*$, we have
$
f(w)\geq\frac{1}{4} \sum_{v \in \overline{N}_{U}(w)} x_v,
$
where $\overline{N}_{U}(w)$ is the set of non-neighbors of $w$ in $U$.
\end{claim}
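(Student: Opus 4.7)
The plan is to bound $\sum_{v \in \overline{N}_U(w)} x_v$ from above by a linear combination of the two quantities that appear in $f(w)$, by subtracting the eigenvalue equations at $u^*$ and $w$, and then to verify the resulting algebraic inequality via a short case analysis on the size of $d_U(w)$.

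First, I would use that $w \in W$ is non-adjacent to $u^*$. Subtracting the eigenvalue equations
\[
\rho x_{u^*} = \sum_{u \in U} x_u, \qquad \rho x_w = \sum_{u \in N_U(w)} x_u + \sum_{v \in N_W(w)} x_v
\]
yields the identity
\[
\sum_{v \in \overline{N}_U(w)} x_v = \rho(x_{u^*} - x_w) + \sum_{v \in N_W(w)} x_v \leq \rho(x_{u^*} - x_w) + d_W(w)\, x_{u^*},
\]
where the last step uses $x_v \leq x_{u^*}$. Multiplying the target inequality by $4$ and rearranging, it therefore suffices to prove
\[
(4 d_U(w) - \rho)(x_{u^*} - x_w) + d_W(w)\, x_{u^*} \geq 0. \quad (\star)
\]

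Next I would split on the size of $d_U(w)$. If $4 d_U(w) \geq \rho$, both terms of $(\star)$ are non-negative and the inequality is immediate. Otherwise, I would invoke the eigenvalue equation at $w$ once more to obtain $d_W(w) \geq \rho\alpha - d_U(w)$, where $\alpha := x_w/x_{u^*}$. The hypothesis $w \in W^*$ gives $\alpha \geq 1 - 4.5r/\rho$, and combined with $m \geq (9r)^2$ together with the non-trivial assumption $\rho \geq \sqrt{m} \geq 9r$ (otherwise Theorem~\ref{Main-thm-1} is already established), this forces $\alpha \geq 1/2$. Substituting the lower bound on $d_W(w)$ into $(\star)$ and dividing by $x_{u^*}$ reduces matters to
\[
\rho(2\alpha - 1) + d_U(w)(3 - 4\alpha) \geq 0,
\]
which holds for $1/2 \leq \alpha \leq 3/4$ since both coefficients are non-negative, and for $3/4 < \alpha \leq 1$ holds because the Case 2 bound $d_U(w) < \rho/4$ combined with $3 - 4\alpha < 0$ yields the lower bound $\rho(4\alpha - 1)/4 > 0$ after a routine rearrangement.

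The main obstacle I expect is the regime $4 d_U(w) < \rho$, where the first term in $(\star)$ is negative and the argument closes only because two effects cooperate: the $W^*$ definition keeps $x_{u^*} - x_w$ small, while a second application of the eigenvalue equation at $w$ ensures $d_W(w)\, x_{u^*}$ is large. Balancing these contributions is precisely what forces the quantitative hypothesis $m \geq (9r)^2$. The role of $V^* \neq \emptyset$, in contrast, is not used directly in the present argument but ensures that the claim is invoked in a meaningful context in the overall proof of Theorem~\ref{Main-thm-1}.
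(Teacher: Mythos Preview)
Your proof is correct and follows essentially the same strategy as the paper: both start from the identity obtained by subtracting the eigenvalue equations at $u^*$ and $w$ (the paper's inequality~(\ref{eq-2-8})), then run a case analysis on the size of $d_U(w)$, using membership in $W^*$ to bound $x_{u^*}-x_w$ and the eigenvalue equation at $w$ to bound $d_W(w)$ from below.

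The execution differs in one respect worth noting. The paper splits into three cases (on whether $d_W(w)x_{u^*}$ already dominates the target sum, and then on whether $d_U(w)\ge \rho/2$), and in its third case introduces the auxiliary quantity $\alpha=\sum_{v\in N_U(w)}x_v/x_{u^*}$, leading to the slightly delicate inequality~(\ref{eq-2-10}). Your version is more economical: you reduce directly to $(\star)$, split only on whether $4d_U(w)\ge\rho$, and use the simpler auxiliary $\alpha=x_w/x_{u^*}$; the remaining algebra $\rho(2\alpha-1)+d_U(w)(3-4\alpha)\ge 0$ is then verified by a short sign check on $[1/2,3/4]$ and $(3/4,1]$. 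This is a genuine simplification of the paper's Case~3. Your remark that the hypothesis $V^*\neq\emptyset$ is not actually used in the argument is also accurate---the paper invokes it only to guarantee $W^*\neq\emptyset$ so that the claim is not vacuous, and both proofs rely on $\rho\ge 9r$ (implicit from $\rho\ge\sqrt{m}$ in Section~\ref{sec-2}, and from $\rho>\sqrt{m-1}>239r$ in Section~\ref{sec-3}) rather than on $m\ge(9r)^2$ directly.
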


\begin{proof}
Recall that $f(w)$ is defined as $d_U(w)(x_{u^*}\!-\!x_w)x_{u^*}\!+\!\frac{1}{2}d_W(w)x_{u^*}.$
We now consider three cases based on the vertices in $W^*$.

\textbf{Case 1:} $d_W(w) x_{u^*} \geq \sum_{v \in \overline{N}_{U}(w)} x_v$. 

Now, it is easy to see that
$f(w) \geq \frac{1}{2} d_W(w) x_{u^*} \geq \frac{1}{2} \sum_{v \in \overline{N}_{U}(w)} x_v.$

\textbf{Case 2:} $d_W(w) x_{u^*} < \sum_{v \in \overline{N}_{U}(w)} x_v$ and $d_U(w) \geq \frac{\rho}2$. 

Notice that $V^*\subseteq U$. From the eigenvalue equation, we know that
\begin{equation}\label{eq-2-8}
\rho (x_{u^*}\!-\!x_w) =\!\!\!\!\sum_{v \in \overline{N}_{U}(w)}\!\!\!\!x_v-\!\!\!\!\sum_{v \in N_W(w)}\!\!\!\!x_v 
\geq \!\!\!\!\sum_{v \in\overline{N}_{U}(w)}\!\!\!\!x_v-d_W(w) x_{u^*}.
\end{equation}
Since $d_U(w) \geq \frac{\rho}2$, it follows that
\[
d_U(w)(x_{u^*}\!-\!x_w) \geq \frac{1}{2}\sum_{v \in \overline{N}_{U}(w)}\!\!\!\!x_v - \frac{1}{2}d_W(w) x_{u^*},
\]
and hence,
$f(w)\geq\frac{1}{2} \sum_{v \in \overline{N}_{U}(w)}x_v.$

\textbf{Case 3:} $d_W(w) x_{u^*} < \sum_{v \in \overline{N}_{U}(w)} x_v$ and $d_U(w)<\frac{\rho}2$. 

Let $\sum_{v \in N_U(w)} x_v=\alpha x_{u^*}$. 
By the definition of $W^*$, we know that $x_w \geq (1 -\frac {4.5r}{\rho}) x_{u^*}$ for each $w \in W^*$. 
Therefore, the eigenvalue equation gives:
$(\rho\!-\!4.5r)x_{u^*}\leq\rho x_w \leq \alpha x_{u^*}\!+\!d_W(w) x_{u^*},$
which implies that $d_W(w) \geq \rho\!- \!\alpha \!-\! 4.5r $.

On the other hand, 
since $\alpha x_{u^*}=\sum_{v \in N_U(w)} x_v\leq d_U(w) x_{u^*}$, we have $\alpha\leq d_U(w)<\frac{\rho}2$. 
In view of inequality (\ref{eq-2-8}), we obtain:
\[
d_U(w)(x_{u^*}\!-\!x_w) \geq \frac{\alpha}{\rho}\!\!\!\sum_{v \in \overline{N}_{U}(w)}\!\!\!\!x_v-\frac{\alpha}{\rho}d_W(w) x_{u^*}.
\]
It follows that 
\[
f(w)\!\geq\!\frac{\alpha}{\rho}\!\!\!\sum_{v \in \overline{N}_{U}(w)}\!\!\!\!x_v
\!+\!\big(\frac{1}{2}\!-\!\frac{\alpha}{\rho}\big)d_W(w) x_{u^*}\!\geq\!\frac{\alpha}{\rho}\!\!\!\sum_{v \in \overline{N}_{U}(w)}\!\!\!\!x_v
\!+\!\big(\frac{1}{2}\!-\!\frac{\alpha}{\rho}\big)\big(\rho\!-\!\alpha\!-\!4.5r\big)x_{u^*}.
\]

Now, to conclude that $f(w)\geq\frac{1}{4}\sum_{v\in\overline{N}_{U}(w)}x_v$, it suffices to show that
\begin{equation}\label{eq-2-9}
\big(\frac{1}{2}\!-\!\frac{\alpha}{\rho}\big)\big(\rho\!-\!\alpha\!-\!4.5r\big)x_{u^*}\geq\big(\frac14\!-\!\frac{\alpha}{\rho}\big)\!\!\!\!\sum_{v\in\overline{N}_{U}(w)}\!\!\!\!x_v.
\end{equation}
Since $\rho\geq9r$ and $\alpha<\frac \rho2$, we have $(\frac{1}{2}\!-\!\frac{\alpha}{\rho})(\rho\!-\!\alpha\!-\!4.5r)\!>\!0$.
If $\alpha\geq\frac{\rho}4$, then inequality (\ref{eq-2-9}) holds trivially.
Next, assume that $\alpha<\frac{\rho}4$. 

Notice that $\sum_{v \in U} x_v=\rho x_{u^*}$ and $\sum_{v \in N_U(w)} x_v=\alpha x_{u^*}$. 
Thus, $\sum_{v\in\overline{N}_U(w)}x_v\!=\!(\rho\!-\!\alpha) x_{u^*}$.
Therefore, we only need to demonstrate:
\[
\big(\frac{1}{2}\!-\!\frac{\alpha}{\rho}\big)\big(\rho\!-\!\alpha\!-\!4.5r\big)\geq\big(\frac14\!-\!\frac{\alpha}{\rho}\big)\big(\rho\!-\!\alpha\big),
\]
or equivalently: 
\begin{equation}\label{eq-2-10}
\rho\!\geq\!9r\!+\!\big(1\!-\!\frac{18r}{\rho}\big)\alpha.
\end{equation}
Note that $\rho\geq9r$ and $\alpha<\frac{\rho}4$. 
Whether or not $\rho\leq18r$,  
it is straightforward to see that (\ref{eq-2-10}) holds, thus completing the proof.
\end{proof}

Furthermore, by Claim~\ref{clm-2.4}, we have
\begin{equation}\label{eq-2-11}
\sum_{w \in W^*}\!\!f(w)\geq\frac{1}{4}\sum_{w \in W^*}\sum_{v \in \overline{N}_{V^*}(w)}\!\!x_v
=\frac{1}{4} \sum_{v \in V^*}\sum_{w \in \overline{N}_{W^*}(v)}\!\!x_v.
\end{equation}

\begin{proof}[\bf{Proof of Theorem \ref{Main-thm-1}}]
Let \( G \) be a \( B_{r+1} \)-free graph on \( m \geq (9r)^2\) edges satisfying
\( \rho(G) \geq \sqrt{m} \).
Additionally, let $c=1$ in the definitions of $U_1$ and $U_2$.
It suffices to prove that \( G \) must be a complete bipartite graph.

First, we aim to show that \( V^* = \emptyset \).  
Suppose, for contradiction, that \( V^* \neq \emptyset \).
Recall that $\frac12\leq \beta_v\leq1$ for $v\in V^*$. 
By Claim~\ref{clm-2.3}, 
every $v\in V^*$ has at least $(\beta_v\!-\!\frac{1}{4.5})\rho\!-\!2r\!+\!(1\!-\!\beta_v)$ non-neighbors in $W^*$.
Note that $\rho \geq \sqrt{m} \geq 9r$.
Thus, from (\ref{eq-2-11}) we have
\begin{equation}\label{eq-2-12}
\sum_{w \in W^*}\!\!f(w)\geq\frac{1}{4}\sum_{v \in V^*}\!\!\big((9\beta_v\!-\!4)r+(1\!-\!\beta_v)\big)x_v>\!\!\sum_{v\in V^*}\!\!\big(2\!-\!\frac1{\beta_v}\big)r x_v,
\end{equation}
where the second inequality follows from the fact that $9\beta_v+\frac4{\beta_v}>12$ (unless $\beta_v=\frac23$).

Note that $\rho^2 \geq m $ and $c=1$.  
Rearranging terms of (\ref{eq-2-6}), we have

\begin{equation*}
\sum\limits_{v \in V^*}\!\!\!\big(2\!-\!\frac1{\beta_v}\big)rx_v
\!\geq\! \sum\limits_{w \in W^*}f(w),
\end{equation*}
which leads to a contradiction with the inequality in (\ref{eq-2-12}). 
Therefore, $V^*=\emptyset$.

By the definition of $V^*$,
this immediately implies that $U_2 = \emptyset$. 
Next, we establish that $U_1 = \emptyset$.
Assume that $U_1 \neq \emptyset$. Then, by Claim \ref{clm-2.1}, we obtain

\[
\sum_{w \in W \setminus W^*}\!\!f(w)>  r |U_1|x_{u^*}.
\]
Combining this inequality with \eqref{eq-2-2}--\eqref{2-eq-5}, 
we deduce that \eqref{eq-2-6} is also strict. 
This yields:
\[
\sum_{v \in V^*}\!\!\big(2\!-\!\frac1{\beta_v}\big)rx_v>\!\!\sum_{w \in W^*}\!\!f(w).
\]
Since $V^*=\emptyset$, the left-hand side of the above inequality equals zero, 
while the right-hand side is clearly non-negative.
This leads to a contradiction. Hence, $U_1 = \emptyset$.

Since $U_1 = \emptyset$ and $U_2 = \emptyset$, we get that $x_u + x_v < x_{u^*}$ for any $uv \in E(U)$.
We now proceed to show that $e(U) = 0$.  
Assume, for contradiction, that $e(U) \neq 0$. Then:
\[
\sum_{uv \in E(U)} \!\!(x_u\! +\! x_v)\! <\!\!\sum_{uv \in E(U)}\!\! x_{u^*} \!=\! e(U) x_{u^*}.
\]
Substituting this inequality and $\sum_{w \in W} d_U(w) x_w\leq  e(U,W) x_{u^*}$ into equation \eqref{eq-2-1}, we obtain:
\begin{equation*}
\rho^2 x_{u^*}\!<\!d(u^*)x_{u^*}\!+\!e(U) x_{u^*}\!+\!e(U,W) x_{u^*}\!=\!\big(m\!-\!e(W)\big) x_{u^*}\!\leq\!m x_{u^*},
\end{equation*}
which contradicts the fact that $\rho^2 x_{u^*} \geq m x_{u^*}$. Therefore, $e(U) = 0$.

Finally, substituting $e(U) = 0$ and $\sum_{w \in W} d_U(w) x_w\leq  e(U,W) x_{u^*}$ into \eqref{eq-2-1} gives:
\begin{equation}\label{eq-2-8}
\rho^2 x_{u^*}\!\leq\!d(u^*)x_{u^*}\!+\!e(U,W) x_{u^*}\!=\!(m\!-\!e(W)) x_{u^*}\!\leq\!m x_{u^*}.
\end{equation}
Since $\rho^2 x_{u^*} \geq m x_{u^*}$, we have $\sum_{w \in W} d_U(w) x_w= e(U,W) x_{u^*}$, and all inequalities in \eqref{eq-2-8} must hold as equalities. 
Therefore, $e(W) = 0$, and \(x_w = x_{u^*}\) for all \(w \in W\). 
This implies that $N(w)=N(u^*)$ for all $w \in W$.
Hence, $G$ is a complete bipartite graph.
\end{proof}

\section{Proof of Theorem \ref{Main-thm-2}}\label{sec-3}

Let \( G\) be an arbitrary $B_{r+1}$-free graph with \( m \geq (240r)^2\) edges, such that
its spectral radius $\rho$ satisfies: 
$\rho^2\geq m-1+\frac{2}{\rho-1}$.
To demonstrate Theorem \ref{Main-thm-2},
it suffices to show that $G\cong S^{+}_{m,s}$ for some $s\in\{1,2,\ldots,r\}$.

Let \( \rho = \rho(G) \) and let \( \mathbf{x} \) be the Perron vector of \( G\), 
with coordinates \( x_u \) for \( u \in V(G) \). Let \( u^* \) be a vertex with maximum Perron weight, i.e., 
\( x_{u^*} = \max\{x_u : u \in V(G)\} \). Define \( U = N(u^*) \) and \( W = V(G) \setminus (\{u^*\} \cup U) \). 
The definitions of \( U_1, U_2, E_1, E_2, E_3 \) are the same as in Section 2, 
with \( c = 2 \) adopted in the definitions of \( U_1 \) and \( U_2 \).
Therefore,  
inequalities (\ref{eq-2-1})-(\ref{eq-2-6}) and Claims \ref{clm-2.1}-\ref{clm-2.4} hold true in this section as well.

Notice that $c=2$ and $\rho^2> m-1$. Combining these with inequality (\ref{eq-2-6}), we obtain  
\begin{equation*}
(m-1)x_{u^*}\!<\!\Big(m\!-\!r|U_1|\Big)x_{u^*}\!+\!\!\sum_{v \in V^*}\!\!\!\big(2\!-\!\frac1{\beta_v}\big)rx_v\!-\!\!\!\sum_{w \in W^*}\!\!\!f(w),
\end{equation*}
which leads to the following inequality:
\begin{equation}\label{eq-2-14}
\sum_{w \in W^*}\!\!\!f(w)+(r|U_1|-1)x_{u^*}\!\!<\!\!\sum_{v \in V^*}\!\!\!\big(2\!-\!\frac1{\beta_v}\big)rx_v.
\end{equation}
Since \( m \geq (240r)^2\), we have $\rho>\sqrt{m-1}>239r$, and thus
$$\big(\beta_v\!-\!\frac{2}{4.5}\big)\rho\!-\!2r\geq\big(239\beta_v\!-\!109\big)r\geq 100\big(2-\frac{1}{\beta_v}\big)r.$$
Thus, by Claim \ref{clm-2.3}, every $v\in V^*$ has at least $100(2-\frac{1}{\beta_v})r$ non-neighbors in $W^*$. 
Therefore, by inequality (\ref{eq-2-11}), we have
\begin{equation}\label{eq-2-15}
\sum_{w \in W^*}\!\!f(w)\geq
\frac{1}{4} \sum_{v \in V^*}\sum_{w \in \overline{N}_{W^*}(v)}\!\!x_v\geq 25 \sum_{v \in V^*}\big(2-\frac{1}{\beta_v}\big)rx_v.
\end{equation}

\begin{claim}\label{clm-3.1}
We have $U_1=\emptyset$, and thus $E_1=\emptyset$.
\end{claim}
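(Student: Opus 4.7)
The plan is to argue by contradiction, supposing $U_1\neq\emptyset$ so that $r|U_1|-1\geq r-1\geq 0$, and then derive a contradiction by confronting the two inequalities (\ref{eq-2-14}) and (\ref{eq-2-15}) already on the table. The decisive observation is that these estimates pin $\sum_{w\in W^*}f(w)$ from opposite sides in terms of the single nonnegative quantity $A:=\sum_{v\in V^*}\bigl(2-\tfrac{1}{\beta_v}\bigr)rx_v\geq 0$; nonnegativity here uses $\beta_v\in[\tfrac12,1]$, which is inherited from the fact that every $u\in U_2$ has $x_u\geq\tfrac12 x_{u^*}$. Specifically, (\ref{eq-2-14}) rearranges to the upper bound $\sum_{w\in W^*}f(w)<A-(r|U_1|-1)x_{u^*}$, while the strengthened threshold $\rho>239r$ enforced by $m\geq(240r)^2$ upgrades Claim \ref{clm-2.3} into the lower bound $\sum_{w\in W^*}f(w)\geq 25A$ recorded in (\ref{eq-2-15}).

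I first dispose of the degenerate case $V^*=\emptyset$. There $A=0$, so (\ref{eq-2-14}) collapses to $\sum_{w\in W^*}f(w)<-(r|U_1|-1)x_{u^*}\leq 0$, contradicting the termwise nonnegativity $f(w)=d_U(w)(x_{u^*}-x_w)+\tfrac{1}{2}d_W(w)x_{u^*}\geq 0$. In the main case $V^*\neq\emptyset$, I simply chain (\ref{eq-2-15}) into (\ref{eq-2-14}) to obtain $25A\leq\sum_{w\in W^*}f(w)<A-(r|U_1|-1)x_{u^*}$, hence $24A<-(r|U_1|-1)x_{u^*}\leq 0$, which again contradicts $A\geq 0$. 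Either branch refutes $U_1\neq\emptyset$, and then $E_1=\bigcup_{u\in U_1}E_u^*=\emptyset$ follows at once from the definition of $E_1$.

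I do not anticipate any real obstacle here: the genuine effort was front-loaded into the calibration of the constant $c=2$ in the definitions of $U_1,U_2$ and of the size threshold $m\geq(240r)^2$, both engineered precisely so that the multiplicative gap of $25$ between the lower bound in (\ref{eq-2-15}) and the upper bound in (\ref{eq-2-14}) materializes. Given those preparatory estimates, the present claim is essentially a one-line chase; the only care needed is to verify $2-\tfrac{1}{\beta_v}\geq 0$ for every $v\in V^*$, which is automatic.
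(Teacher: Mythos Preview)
Your proof is correct and follows essentially the same route as the paper's: assume $U_1\neq\emptyset$, use $r|U_1|\geq 1$ in \eqref{eq-2-14}, and confront the result with \eqref{eq-2-15} to reach a contradiction via $A\geq 0$. Your explicit case split on $V^*=\emptyset$ is not strictly needed (the inequality $25A\leq\sum_{w\in W^*}f(w)<A$ is already vacuous when $A=0$), but it is harmless and arguably cleaner given that the derivation of \eqref{eq-2-11} via Claim~\ref{clm-2.4} formally assumes $V^*\neq\emptyset$.
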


\begin{proof}
We prove this claim by contradiction. Suppose $U_1 \neq \emptyset$. Then, $r|U_1| \geq 1$. 
Substituting this into (\ref{eq-2-14}), we obtain:
$\sum_{w\in W^*}f(w)\!<\!\sum_{v\in V^*}(2\!-\!\frac{1}{\beta_v})rx_v,$
which contradicts (\ref{eq-2-15}). 
Therefore, we have $U_1 = \emptyset$, and consequently, $E_1=\emptyset.$
\end{proof}

By Claim \ref{clm-3.1} and inequality (\ref{eq-2-14}), we have
\begin{equation}\label{eq-2-16}
\sum_{w \in W^*}\!\!\!f(w)\!\!<\!\!\sum_{v \in V^*}\!\!\!\big(2\!-\!\frac1{\beta_v}\big)rx_v\!+\!x_{u^*}.
\end{equation}
Based on inequality (\ref{eq-2-16}), we will proceed to establish a claim regarding the edges in $E_2$.

\begin{claim}\label{clm-3.2}
We have $\sum_{uv \in E_2}(x_u\!+\!x_v)<(|E_2|+\frac{1}{24})x_{u^*}.$
\end{claim}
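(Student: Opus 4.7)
The plan is to derive the claim by combining the two bounds on $\sum_{w\in W^*}f(w)$ already established, namely inequalities~(\ref{eq-2-15}) and~(\ref{eq-2-16}), to extract a small upper bound on the quantity $\sum_{v\in V^*}(2-\tfrac1{\beta_v})rx_v$, and then feeding this into the already-proved inequality~(\ref{2-eq-4}) to estimate $\sum_{uv\in E_2}(x_u+x_v)$ directly.

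More concretely, I would first chain~(\ref{eq-2-15}) and~(\ref{eq-2-16}): the lower bound gives
\[
25\sum_{v\in V^*}\bigl(2-\tfrac1{\beta_v}\bigr)rx_v\leq \sum_{w\in W^*}f(w),
\]
while~(\ref{eq-2-16}) provides the upper bound $\sum_{w\in W^*}f(w)<\sum_{v\in V^*}(2-\tfrac1{\beta_v})rx_v+x_{u^*}$. Subtracting the common term from both sides and dividing by $24$ yields
\[
\sum_{v\in V^*}\bigl(2-\tfrac1{\beta_v}\bigr)rx_v<\tfrac1{24}x_{u^*}.
\]
(Note that the coefficient $2-\tfrac1{\beta_v}\geq 0$ since $\beta_v\geq\tfrac12$, so the subtraction is legitimate.)

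Next, I would apply inequality~(\ref{2-eq-4}), which was established in Section~\ref{sec-2} independently of any supersaturation hypothesis and therefore remains valid here. It asserts
\[
\sum_{uv\in E_2}(x_u+x_v)\leq |E_2|\,x_{u^*}+\sum_{v\in V^*}\bigl(2-\tfrac1{\beta_v}\bigr)rx_v.
\]
Substituting the bound just derived immediately produces
\[
\sum_{uv\in E_2}(x_u+x_v)<|E_2|\,x_{u^*}+\tfrac1{24}x_{u^*}=\bigl(|E_2|+\tfrac1{24}\bigr)x_{u^*},
\]
as claimed.

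The argument is essentially a bookkeeping step: there is no real obstacle, since the two key inequalities have already been proved, and the only thing to verify is that the factor $25$ from~(\ref{eq-2-15}) (which ultimately comes from $c=2$ and the assumption $m\geq(240r)^2$) is comfortably larger than $1$, so that the chain yields a nontrivial bound. The mild care needed is simply to observe that $2-\tfrac1{\beta_v}\geq 0$, so subtracting $\sum_{v\in V^*}(2-\tfrac1{\beta_v})rx_v$ from both sides of the combined inequality preserves its direction.
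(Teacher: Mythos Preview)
Your proof is correct and follows essentially the same route as the paper's own proof: both reduce the claim to showing $\sum_{v\in V^*}(2-\tfrac1{\beta_v})rx_v<\tfrac1{24}x_{u^*}$ via inequality~(\ref{2-eq-4}), and both obtain this bound by confronting~(\ref{eq-2-15}) with~(\ref{eq-2-16}). The only cosmetic difference is that the paper phrases the final step as a proof by contradiction (assuming the sum is at least $\tfrac1{24}x_{u^*}$ and deriving $\sum_{w\in W^*}f(w)\geq x_{u^*}+\sum_{v\in V^*}(2-\tfrac1{\beta_v})rx_v$, which contradicts~(\ref{eq-2-16})), whereas you chain the two inequalities directly and solve for the sum.
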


\begin{proof}
From inequality (\ref{2-eq-4}), we know that
\[\sum_{uv \in E_2}\!\! (x_u\!+\!x_v)\leq |E_2|x_{u^*}\!+\sum_{v \in V^*}\!\!\!\big(2\!-\!\frac1{\beta_v}\big)rx_v.\]
It suffices to show
$\sum_{v \in V^*}\!\!\big(2\!-\!\frac1{\beta_v}\big)rx_v<\frac{1}{24}x_{u^*}.$

Assume, for the sake of contradiction, that 
$\sum_{v \in V^*}\!\!\big(2\!-\!\frac1{\beta_v}\big)rx_v\geq\frac{1}{24}x_{u^*}.$ 
Then, by inequality (\ref{eq-2-15}), we obtain:
\begin{equation*}
\sum_{w \in W^*}\!\!f(w)\geq
25\!\!\sum_{v \in V^*}\!\!\!\big(2-\frac{1}{\beta_v}\big)rx_v\geq x_{u^*}\!+\!\sum_{v \in V^*}\!\!\!\big(2-\frac{1}{\beta_v}\big)rx_v,
\end{equation*}
which contradicts inequality (\ref{eq-2-16}).
Therefore, the claim is proven.
\end{proof}

Recall that $E_1=\emptyset$, and $x_u+x_v<x_{u^*}$ for each $uv\in E_3$.
Thus,
by equality (\ref{eq-2-2}) and Claim \ref{clm-3.2}, 
we obtain $\rho^2 x_{u^*}\!<\!\big(m\!+\!\frac{1}{24}\big)x_{u^*}\!-\!\sum_{w \in W}\!\!f(w)$.
Since $\rho^2\!>\!m\!-\!1$, we conclude that:
\begin{equation}\label{eq-2-17}
\sum_{w \in W}\!\!f(w)\!<\!\big(1\!+\!\frac{1}{24}\big)x_{u^*}.
\end{equation}

Notice that $f(w)\!=\!d_U(w)(x_{u^*}\!-\!x_w)\!+\!\frac12d_W(w)x_{u^*}.$
From inequality (\ref{eq-2-17}), we can observe that $e(W)=\frac12\sum_{w \in W}d_W(w)\leq1$.

Recall that $W^* = \{ w \in W :~x_w \geq (1 - \frac{4.5r}{\rho}) x_{u^*} \}$.
For any $w\in W\setminus W^*$, we have 
$f(w)\geq \frac{4.5r}\rho d_U(w),$ and thus, (\ref{eq-2-17}) implies that $d_U(w)\leq \frac\rho4.$
It follows that $\rho x_w\leq (\frac\rho4+1)x_{u^*}\leq\frac{\rho}3x_{u^*}$.
Therefore, we further have $x_w\leq \frac13x_{u^*}$ and $f(w)\geq \frac23d_U(w)+\frac12d_W(w)$ for each $w\in W\setminus W^*$.
Hence, from inequality (\ref{eq-2-17}) and $e(W)\leq1$, we can also conclude that 
$|W\setminus W^*|\leq 1$, and if there exists $w_0\in W\setminus W^*$, then 
\begin{equation}\label{eq-18}
d(w_0)=1.
\end{equation}

\begin{claim}\label{clm-3.3}
If $E(U)\neq \emptyset$, then \( x_u < \frac{1}{3}x_{u^*} \) for each \( u \in U \).
\end{claim}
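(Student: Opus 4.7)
The plan is a proof by contradiction. Assume there exists $u_0 \in U$ with $x_{u_0} \geq \tfrac{1}{3} x_{u^*}$ while $E(U) \neq \emptyset$, and derive a contradiction.

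The first step uses the eigenvalue equation at $u_0$,
\[
\rho x_{u_0} = x_{u^*} + \sum_{v \in N_U(u_0)} x_v + \sum_{w \in N_W(u_0)} x_w,
\]
together with $d_U(u_0) \leq r$ (since $u^* u_0 \in E$ and $G$ is $B_{r+1}$-free), $x_w \leq x_{u^*}$ on $W^*$, and the facts $x_{w_0} \leq \tfrac{1}{3} x_{u^*}$ with $|W \setminus W^*| \leq 1$ derived above from \eqref{eq-2-17}, to conclude $d_{W^*}(u_0) \geq \rho/3 - r - 4/3 \geq 79r$, using $\rho > 239 r$. So $u_0$ has very many $W^*$-neighbors.

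The second step establishes a strong structural property of $W^*$: for every $w \in W^*$, the eigenvalue equation at $w$ combined with $x_w \geq (1 - 4.5 r/\rho) x_{u^*}$ and $d_W(w) \leq 2 e(W) \leq 2$ (which follows from $e(W) \leq 1$) yields
\[
\sum_{v \in U \setminus N(w)} x_v \leq (4.5 r + 2) x_{u^*},
\]
so the Perron weights of $w$'s non-neighbors in $U$ together are tiny. I then pick an edge $u_1 u_2 \in E(U)$ and exploit the $B_{r+1}$-freeness: since $u^* \in N(u_1) \cap N(u_2)$, at most $r - 1$ further common neighbors exist in $W$. Hence at most $r - 1$ of the $\geq 79r$ vertices in $N_{W^*}(u_0)$ are adjacent to both $u_1$ and $u_2$, and by pigeonhole at least $\tfrac{79r - (r-1)}{2} \geq 39 r$ of them fail to be adjacent to, say, $u_1$, placing $u_1 \in U \setminus N(w)$ for each such $w$.

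The final contradiction should follow by combining this cardinal pigeonhole with the weight bound above, together with the constraint $d_U(u_0) x_{u_0} \leq \sum_{u \in U} d_U(u) x_u < (e(U) + \tfrac{1}{24}) x_{u^*}$ supplied by Claim~\ref{clm-3.2} (which forces $d_U(u_0) \leq 3 e(U)$), and the Perron identity $\sum_{v \in U} x_v = \rho x_{u^*}$. The main obstacle I foresee is this final accounting: a naive summation of $\sum_{v \in U \setminus N(w)} x_v \leq (4.5 r + 2) x_{u^*}$ over the $\geq 39 r$ values of $w$ only yields the trivial $x_{u_1} \leq (4.5 r + 2) x_{u^*}$. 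To close the argument, one must simultaneously track the contributions of $u_0, u_1, u_2$ across the non-neighbor lists of many $w \in W^*$, and balance them against the eigenvalue equation at $u^*$.
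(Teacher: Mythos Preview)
Your proposal is incomplete, and you correctly flag this yourself: the ``final accounting'' is not there. Let me point out concretely what is missing and where you took a wrong turn.

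The contradiction in the paper runs through equation~\eqref{eq-2-2}, and the quantity you must control is $\sum_{w\in W} f(w)$ \emph{from below}. Your step~3 goes in the opposite direction: the inequality $\sum_{v\in U\setminus N(w)} x_v \le (4.5r+2)x_{u^*}$ is an \emph{upper} bound on the non-neighbor weight, and as you observe it yields nothing beyond the trivial $x_{u_1}\le(4.5r+2)x_{u^*}$. The correct move is the reverse. For $w\in W^*$ with $d_W(w)=0$ and $v_1\notin N(w)$ (where $u_1v_1\in E(U)$, say $x_{u_1}\ge x_{v_1}$), the eigenvalue comparison gives $\rho(x_{u^*}-x_w)\ge x_{v_1}$, and since $d_U(w)\ge \rho-4.5r$ one gets
\[
f(w)=d_U(w)(x_{u^*}-x_w)\ \ge\ \Bigl(1-\tfrac{4.5r}{\rho}\Bigr)x_{v_1}.
\]
Summing over all such $w$ (and the analogous bound with $u_1$ when $u_1\notin N(w)$) is what produces a large $\sum f(w)$.

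A second misdirection: you restrict attention to $N_{W^*}(u_0)$, which only has size $\gtrsim 79r$. In fact the role of $u_0$ is \emph{solely} to lower-bound $|W^*|$ itself: the eigenvalue equation at $u_0$ forces $|W^*|\ge d_{W^*}(u_0)\ge \lambda\rho - O(r)$ with $\lambda=x_{u_0}/x_{u^*}\ge\tfrac13$. You should sum $f(w)$ over all of $W^*$ (minus at most $r$ vertices in $N_W(u_1)\cap N_W(v_1)$ and at most three exceptional vertices from $e(W)\le 1$, $|W\setminus W^*|\le 1$), giving roughly $(\lambda\rho)\cdot x_{v_1}$. Then invoke the eigenvalue equation at $v_1$, namely $\rho x_{v_1}\ge x_{u^*}+x_{u_1}$, to convert this into $\sum_{w} f(w)\gtrsim \lambda(x_{u^*}+x_{u_1})$. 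A short calculation using $x_{u_1}\le \lambda x_{u^*}$ (take $u_0$ to maximize $x_u$ over $U$), $\lambda\ge\tfrac13$, and $\rho>239r$ shows this exceeds $x_{u_1}+x_{v_1}+\tfrac{1}{24}x_{u^*}$. Plugging this together with Claim~\ref{clm-3.2} into~\eqref{eq-2-2} yields $\rho^2 x_{u^*}\le (m-1)x_{u^*}$, contradicting $\rho^2>m-1$. Your detour through $d_U(u_0)\le 3e(U)$ is not needed.
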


\begin{proof}
Assume, for the sake of contradiction, that there exists \( u_0 \in U \) such that \( x_{u_0} = \max_{u \in U} x_u \geq \frac{1}{3}x_{u^*} \). 
Let \( \lambda = x_{u_0}/{x_{u^*}} \), where \( \frac{1}{3} \leq \lambda \leq 1 \).
Let \(u_1v_1 \in E(U) \) with \( x_{u_1} \geq x_{v_1} \). 

On the other hand, recall that $e(W)\leq1$ and $|W\setminus W^*|\leq 1$.
Let $w_0$ be the unique possible vertex in $W\setminus W^*$,
and let $w_1w_2$ be the unique possible edge in $E(W)$.
We now define $W_0=W\setminus \{w_0,w_1,w_2\}$. 
Then, $W_0\subseteq W^*$, and $d_W(w)=0$ for any $w\in W_0$.

Note that $d_U(u_0)\leq r$. By the eigenvalue equation for \( u_0 \), we have
\[
\lambda\rho x_{u^*}=\rho x_{u_0} \leq \big(r + 4 + d_{W_0}(u_0) \big) x_{u^*},
\]
which implies \( d_{W_0}(u_0) \geq \lambda\rho\!-\!r -\!4\geq \lambda\rho\!-\!5r\).
Hence, we have \( |W_0| \geq \lambda \rho - 5r\).

Define \( W_1 = N_{W_0}(u_1) \cap N_{W_0}(v_1) \), \( W_2 = N_{W_0}(u_1) \setminus W_1\), 
and \( W_3 = W_0\setminus (W_1 \cup W_2) \). 
Since \( G^* \) is \( B_{r+1} \)-free, 
we have \( |W_1| \leq r\), which implies that 
\( |W_0\setminus W_1|\geq\lambda \rho \!-\! 6r\).
For each \( w \in W_0\), we know that $w\in W^*$. Thus,
\( x_w \geq (1-\frac{4.5r}{\rho}) x_{u^*} \), 
and the eigenvalue equation gives \( \rho x_w \leq d(w) x_{u^*} \). 
Therefore, \( d(w)=d_U(w)\geq \rho - 4.5r \) for $w\in W_0$.

Notice that \( f(w) = d(w)(x_{u^*} - x_w) \) for each \( w \in W_2\cup W_3\).
If \( w \in W_2\), we have \( f(w) \geq (\rho - 4.5r)\frac{x_{v_1}}{\rho}\geq(1-\frac{6r}\rho)x_{v_1}. \) 
We also have \( f(w) \geq (1-\frac{6r}\rho)x_{u_1}\) for \( w \in W_3\).
Summing these two inequalities and noting that \( x_{u_1} \geq x_{v_1} \), we obtain:
\[
\sum_{w \in W_2\cup W_3}\!\!\!f(w) \geq |W_0\setminus W_1|\big(1\!-\!\frac{6r}{\rho}\big)x_{v_1}
\geq (\lambda \rho\!-\!6r)\big(1\!-\!\frac{6r}{\rho}\big) x_{v_1}.
\]
Since \( \rho>239r \), we have \( 2r\big( 1 \!-\! \frac{6r}{\rho} \big) \geq 1 \).
It follows that
\[
(\lambda \rho \!-\! 6r) \big( 1\! -\! \frac{6r}{\rho} \big) x_{v_1} 
\geq (\lambda \rho\!-\! 8r\!) \big( 1\! -\! \frac{6r}{\rho} \big) x_{v_1} \!+\! x_{v_1}.
\]
Observe that \( \rho x_{v_1} \geq x_{u_1} + x_{u^*} \). Combining this with the above two inequalities gives:
\begin{equation}\label{eq-19}
\sum_{w \in W_2\cup W_3}\!\!\!f(w)\geq \big( \lambda\! -\! \frac{8r}{\rho} \big) \big( 1\! -\! \frac{6r}{\rho} \big) (x_{u_1}\! +\! x_{u^*}) \!+\! x_{v_1}.
\end{equation}

We now verify the following inequality:
\begin{equation}\label{eq-20}
\big(\lambda \!-\!\frac{8r}{\rho} \big) \big(1\!-\! 
\frac{6r}{\rho}\big)(x_{u_1}\!+\!x_{u^*})\geq x_{u_1}\!+\!\frac{1}{24}x_{u^*}.
\end{equation}
Given that \( x_{u_1}\leq x_{u_0} = \lambda x_{u^*} \), 
where $\lambda\leq1$, it suffices to check the simplified inequality:
$(\lambda\!-\!\frac{8r}{\rho})(1\!-\!\frac{6r}{\rho})(\lambda\!+\!1)\geq\lambda\!+\!\frac{1}{24}.$
This clearly holds under the conditions \(\rho>239r \) and \(\lambda \geq\frac13\).
Hence, inequality \eqref{eq-20} is established.

Now, combining \eqref{eq-19} and \eqref{eq-20}, we obtain 
$\sum_{w \in W} f(w)\!\geq\!x_{u_1}\!+\!x_{v_1}\!+\!\frac{1}{24}x_{u^*}.$
On the other hand, by Claim \ref{clm-3.2}, we know that
$$\sum_{uv\in E(U)}\!\!\!(x_u\!+\!x_v)\leq\big(e(U)\!-\!1\!+\!\frac1{24}\big)x_{u^*}\!+\!(x_{u_1}\!+\!x_{v_1}).$$
From equality \eqref{eq-2-2}, it follows that
\[
\rho^2x_{u^*}\leq\big(m\!-\!1\!+\!\frac1{24}\big)x_{u^*} 
\!+\!(x_{u_1}\!+\!x_{v_1})\!-\!\sum_{w \in W}f(w)\leq(m\!-\!1)x_{u^*},
\]
which contradicts the assumption that \( \rho^2>m-1 \). 
Therefore, \( x_u < \frac{1}{3}x_{u^*} \) for all \(u\in U\).
\end{proof}

\begin{claim}\label{clm-3.4}
$W = W^*$ and $E(W) = \emptyset$.
\end{claim}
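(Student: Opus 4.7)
The plan is to sharpen the bound $(\ref{eq-2-17})$ by feeding the full spectral hypothesis $\rho^2\ge m-1+\frac{2}{\rho-1}$ back into equality $(\ref{eq-2-2})$ and replacing Claim~\ref{clm-3.2} with the stronger Claim~\ref{clm-3.3}, thereby obtaining $\sum_{w\in W}f(w)<x_{u^*}$ strictly. Once this refined bound is in hand, both $E(W)=\emptyset$ and $W=W^*$ will follow from short pigeonhole-type arguments.

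For the refined bound I would split on whether $E(U)=\emptyset$. If it is, then $(\ref{eq-2-2})$ collapses to $\rho^2x_{u^*}=mx_{u^*}-\sum_{w\in W}f(w)$, and the spectral hypothesis gives $\sum_{w\in W}f(w)\le\bigl(1-\tfrac{2}{\rho-1}\bigr)x_{u^*}$. If instead $E(U)\neq\emptyset$, Claim~\ref{clm-3.3} yields $x_u+x_v<\tfrac{2}{3}x_{u^*}$ for every $uv\in E(U)$, so $\sum_{uv\in E(U)}(x_u+x_v)<\tfrac{2}{3}e(U)x_{u^*}$; plugging this into $(\ref{eq-2-2})$ together with the spectral hypothesis gives the sharper bound $\sum_{w\in W}f(w)<\bigl(\tfrac{2}{3}-\tfrac{2}{\rho-1}\bigr)x_{u^*}$. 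Either way, $\sum_{w\in W}f(w)<x_{u^*}$.

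To conclude, observe that $f(w)\ge\tfrac{1}{2}d_W(w)x_{u^*}$, so $\sum_{w\in W}f(w)\ge e(W)x_{u^*}$; thus $e(W)\ge 1$ would violate the refined bound, forcing $E(W)=\emptyset$. For $W=W^*$, suppose for contradiction that there exists $w_0\in W\setminus W^*$. By $(\ref{eq-18})$, $d(w_0)=1$, and since $E(W)=\emptyset$, the unique neighbour $u_0$ of $w_0$ must lie in $U$. The eigenvalue equation at the leaf $w_0$ then gives $\rho x_{w_0}=x_{u_0}\le x_{u^*}$, whence
\[
f(w_0)=x_{u^*}-x_{w_0}\ge\Bigl(1-\tfrac{1}{\rho}\Bigr)x_{u^*}.
\]
Since $\rho>239r\ge 239$, we have $1-\tfrac{1}{\rho}>\tfrac{2}{3}$ and $\tfrac{1}{\rho}<\tfrac{2}{\rho-1}$, so $f(w_0)$ strictly exceeds both candidate upper bounds from Step~1. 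This contradicts $f(w_0)\le\sum_{w\in W}f(w)$.

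The delicate point, and what I expect to be the main obstacle, is the $E(U)=\emptyset$ branch of the last step. Using only the coarser estimate $x_{w_0}\le\tfrac{1}{3}x_{u^*}$ (noted in the paragraph preceding Claim~\ref{clm-3.3}) gives $f(w_0)\ge\tfrac{2}{3}x_{u^*}$, which is not strong enough to defeat the $(1-\tfrac{2}{\rho-1})x_{u^*}$ bound when $\rho$ is large. The resolution is that the leaf structure forces $\rho x_{w_0}=x_{u_0}$ and hence the sharper $x_{w_0}\le x_{u^*}/\rho$; the argument then succeeds precisely because of the narrow inequality $\tfrac{1}{\rho}<\tfrac{2}{\rho-1}$, which is essentially the slack built into the spectral hypothesis itself.
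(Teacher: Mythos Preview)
Your proof is correct, and it differs from the paper's in a couple of instructive ways.

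\textbf{Order and the $E(W)=\emptyset$ step.} You establish $E(W)=\emptyset$ first, directly from $f(w)\ge\tfrac12 d_W(w)x_{u^*}$ and your refined bound $\sum_{w\in W}f(w)<x_{u^*}$; this is cleaner than the paper's route. The paper first proves $W=W^*$ and only then derives $E(W)=\emptyset$ by a separate argument: if $w_1w_2\in E(W)$ with $w_1,w_2\in W^*$, then the eigenvalue equations at $w_1,w_2$ together with $|N_U(w_1)\cap N_U(w_2)|\le r$ force $x_{w_1}+x_{w_2}\le \tfrac{\rho+r}{\rho-1}x_{u^*}\le 1.5\,x_{u^*}$, contradicting $x_{w_1}+x_{w_2}>1.5\,x_{u^*}$ from the definition of $W^*$. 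Your argument avoids this detour entirely.

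\textbf{The $E(U)=\emptyset$ branch.} Here you work harder than necessary. Once you have $E(W)=\emptyset$, the case $E(U)=\emptyset$ is vacuous: with both $E(U)$ and $E(W)$ empty, $G$ is bipartite with parts $\{u^*\}\cup W$ and $U$, contradicting the standing non-bipartiteness hypothesis. This is exactly how the paper proceeds (``Since $G$ is non-bipartite, we conclude that in both cases $E(U)\neq\emptyset$''), allowing it to use only the weaker inequality $\rho^2>m-1$. Your direct handling of this branch via the sharper spectral hypothesis $\rho^2\ge m-1+\tfrac{2}{\rho-1}$ and the inequality $\tfrac{1}{\rho}<\tfrac{2}{\rho-1}$ is valid but superfluous: the ``delicate point'' you flag is in fact an empty case. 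With that simplification, your argument needs only $\rho^2>m-1$, matching the paper.
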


\begin{proof}
As discussed earlier, 
we know that $e(W)\leq1$ and $|W\setminus W^*|\leq1$.
First, 
we prove that $W = W^*$. Assume, for the sake of contradiction, that there exists $w_0\in W \setminus W^*$. 
Then, by \eqref{eq-18}, we have $d(w_0)=1$, which implies 
$x_{w_0}\leq\frac1{\rho}x_{u^*}$.

Let $w$ be the unique neighbor of $w_0$.
If $w\in W$, then $f(w)+f(w_0)\geq x_{u^*}$, and the unique edge $ww_0\in E(W)$ is a pendant edge.
If $w\in U$, then $f(w_0)\geq(1-\frac1\rho)x_{u^*}$.
Furthermore, we asserts that $E(W)=\emptyset$. To see this, suppose that there exists $w_1w_2\in E(W)$.
Then, we have $\sum_{i=0}^2f(w_i)\geq(2-\frac1{\rho})x_{u^*}$, which contradicts inequality (\ref{eq-2-17}).
Since $G$ is non-bipartite, we conclude that in both cases, $E(U)\neq\emptyset.$
Now, by Claim \ref{clm-3.3}, $x_{u}+x_{v}<\frac23x_{u^*}$ for any $uv\in E(U)$.
From \eqref{eq-2-2}, we can deduce the following inequality:
\[
\rho^2 x_{u^*} \leq \big(m\!-\!\frac13e(U)\big)x_{u^*}\!-\!\sum_{w \in W}f(w)\leq\big(m\!-\!\frac43\!+\!\frac1{\rho}\big)x_{u^*},
\]
which contradicts the assumption that 
\( \rho^2 > m - 1 \). 
Therefore, we have $W = W^*$.

We next show that $E(W) = \emptyset$. 
Suppose, for contradiction, that there exists an edge $w_1w_2 \in E(W)$. 
Then $w_1, w_2 \in W^*$, and so
$x_{w_1}\!+\!x_{w_2}\!\geq\!2(1\!-\!\frac{4.5r}{\rho})x_{u^*}\!>\!1.5x_{u^*}$.
On the other hand, $w_1$ and $w_2$ have at most $r$ common neighbors in $U$. 
Note that $\sum_{u\in U}x_u=\rho x_{u^*}$. Thus,
\[
\rho(x_{w_1} + x_{w_2}) \leq x_{w_1} + x_{w_2} + \rho x_{u^*} + r x_{u^*},
\]
which implies $x_{w_1} + x_{w_2} \leq \frac{\rho + r}{\rho - 1} x_{u^*} \leq 1.5x_{u^*}$, a contradiction.
Hence, $E(W) = \emptyset$. 
\end{proof}

\begin{claim}\label{clm-3.5}
Let $\rho_0=\rho(S^{+}_{m,r})$.
Then
$\rho_0^2 = m - 1 + \frac{2r}{\rho_0 - 1} $.
\end{claim}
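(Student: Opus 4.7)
The plan is to exploit the full symmetry of $S^{+}_{m,r}$ to reduce the eigenvalue equation for the Perron vector to a system in just three unknowns, from which the identity drops out by direct algebra. First I would describe the vertex partition: $S^{+}_{m,r}$ consists of a part $A$ of size $r$, a part $B$ of size $(m-1)/r$, every edge between $A$ and $B$, plus a single added edge $uv$ inside $B$. The automorphism group acts transitively on $A$, transitively on $\{u,v\}$, and transitively on $B\setminus\{u,v\}$, so by uniqueness of the Perron vector it must be constant on each of these three orbits; denote the corresponding Perron weights by $a$, $b$, and $c$, respectively.

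Next I would write down the three eigenvalue equations at $\rho_0$, one per orbit. For a vertex of $A$, whose neighbours are all of $B$,
\[
\rho_0 a \;=\; 2b + \Bigl(\tfrac{m-1}{r}-2\Bigr)c.
\]
For $u$ (or $v$), whose neighbours are the $r$ vertices of $A$ together with the other endpoint of the added edge, $\rho_0 b = ra + b$, giving $b=\frac{ra}{\rho_0-1}$. For a vertex of $B\setminus\{u,v\}$, $\rho_0 c = ra$, giving $c=\frac{ra}{\rho_0}$.

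Finally I would substitute these expressions for $b$ and $c$ into the first equation and divide by $a$, obtaining
\[
\rho_0 \;=\; \frac{2r}{\rho_0-1} + \frac{m-1}{\rho_0} - \frac{2r}{\rho_0}.
\]
Multiplying through by $\rho_0$ and using the identity $\frac{2r\rho_0}{\rho_0-1}=2r+\frac{2r}{\rho_0-1}$ causes the two $2r$ terms to cancel, leaving $\rho_0^{2}=m-1+\frac{2r}{\rho_0-1}$, as claimed.

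The only point that requires a comment is that $(m-1)/r$ must be an integer so that $S^{+}_{m,r}$ is well defined; under this implicit divisibility assumption (used throughout the paper's definition of $S^{+}_{m,s}$) the argument is a routine three-variable computation and no genuine obstacle arises.
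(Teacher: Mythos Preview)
Your proof is correct and follows essentially the same approach as the paper: partition the vertices by symmetry so that the Perron vector is constant on each orbit, then use the eigenvalue equations---in particular at the endpoints of the added edge, yielding the factor $\frac{2r}{\rho_0-1}$---to derive the identity. The only cosmetic difference is that the paper packages the eigenvalue equation at a vertex of $A$ via its general formula~\eqref{eq-2-2} (together with $x_w=x_{u^*}$ for $w\in W$), whereas you write the three orbit equations out directly; the underlying algebra is identical.
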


\begin{proof}
Let \( \mathbf{x} \) be its Perron vector with coordinates \( x_v \) for 
each \( v \in V(S^{+}_{m,r}) \). Let \( u^* \) be a vertex attaining the maximum Perron weight, 
that is, \( x_{u^*} = \max\{x_v: d v \in V(S^{+}_{m,r})\} \). 
Define \( U \!=\! N(u^*) \) and \( W \!=\! V(S^{+}_{m,r}) \setminus \bigl( \{u^*\} \cup U \bigr) \).
Then, $|W|=r-1$.

Observe that $S^{+}_{m,r}$ contains exactly one edge $u_1v_1$ in $E(U)$ and satisfies $N(w)=N(u^*)$ for each $w\in W$. 
Thus, $x_w = x_{u^*}$ and $f(w) = 0$ for all $w\in W$ (where $f(w) = d(w)(x_{u^*} - x_w)$ as defined earlier).
From \eqref{eq-2-2}, we derive the following equation:
\begin{equation}\label{eq-21}
\rho_0^2 x_{u^*} = (m - 1)x_{u^*} + (x_{u_1} + x_{v_1}).
\end{equation}
Now, applying the eigenvalue equation to the vertices $u_1$ and $v_1$, we obtain:
\begin{equation*}
\rho_0(x_{u_1} + x_{v_1}) = (x_{u_1} + x_{v_1}) + 2r x_{u^*},
\end{equation*}
which simplifies to $x_{u_1} + x_{v_1} = \frac{2r}{\rho_0 - 1}x_{u^*}$. 
Substituting this into \eqref{eq-21}, we can deduce that 
$\rho_0^2 = m - 1 + \frac{2r}{\rho_0 - 1}$, as claimed.
\end{proof}

In the following, we present the proof of Theorem \ref{Main-thm-2}.
Under the assumption that
$\rho^2\geq m-1+\frac{2}{\rho-1}$,
it suffices to show that $G\cong S^{+}_{m,s}$ for some $s\in\{1,2,\ldots,r\}$.

\begin{proof}[\bf{Proof of Theorem \ref{Main-thm-2}}]

Since \( G^* \) is non-bipartite and \( e(W) = 0 \), we conclude that \( e(U) \geq 1 \). 
Let \( u_1u_2 \in E(U) \) with \( x_{u_1} \geq x_{u_2} \). 
Define
 \( W_1 = N_W(u_1) \cap N_W(u_2) \),
 \( W_2 = N_W(u_1) \setminus W_1 \), and
 \( W_3 = W \setminus (W_1 \cup W_2)\).
By the eigenvalue equations for $u_1$ and $u_2$, we obtain:
\[
\rho(x_{u_1} + x_{u_2}) \leq (x_{u_1} + x_{u_2}) + (|W| + |W_1| + 2)x_{u^*},
\]
which rearranges to:
\begin{equation}\label{eq-22}
x_{u_1}\!+\!x_{u_2}\leq\frac{x_{u^*}}{\rho\!-\!1}\big(|W|\!+\!|W_1|\!+\!2\big).
\end{equation}
By Claim \ref{clm-3.4}, we know that $W=W^*$,
and by Claim \ref{clm-3.3}, $x_u<\frac13x_{u^*}$ for each $u\in U$.
Hence, for any \( w \in W \), we have
\[\rho\big(1\!-\!\frac{4.5r}{\rho}\big)x_{u^*}\leq \rho x_w\leq\!\!\sum_{u\in N_U(w)}\!\!\!x_u\leq\frac{1}{3}d_U(w)x_{u^*},\]
which implies \( d_U(w)\geq3(\rho - 4.5r)\geq\frac{8}{3}\rho \).
For \( w \in W_2\),  we have \( \rho(x_{u^*} - x_w) \geq x_{u_2} \), and thus
\(
f(w) = d_U(w)(x_{u^*}\!-\!x_w)\geq\frac{8}{3}x_{u_2}.
\)
Similarly, for \( w \in W_3\), we obtain
\(
f(w) = d_U(w)(x_{u^*}\!-\!x_w)\geq\frac{8}{3}x_{u_1}\).
Recall that $x_{u_1}\geq x_{u_2}$. Thus, we conclude that
\begin{equation}\label{eq-23}
\sum_{w \in W}\!f(w)\geq\!\!\!\sum_{w \in W_2\cup W_3}\!\!\!f(w) \geq\big(|W|\!-\!|W_1|\big)\cdot\frac{8}{3}x_{u_2}.
\end{equation}

Since \( G^* \) is \( B_{r+1} \)-free, we have \( |W_1| \leq r - 1 \).
Notice that $\rho>239r$.
If $x_{u_2}\geq \frac14x_{u^*}$, then we can easily obtain:
\begin{equation}\label{eq-24}
(\rho-1)x_{u_2}\geq \frac{3}{4}(|W_1|\!+\!1)x_{u^*}.
\end{equation}
If $x_{u_2}<\frac14x_{u^*}$, then 
by the eigenvalue equation for \( u_2 \), 
we have
\[
\rho x_{u_2}\!\geq\! x_{u^*}\!+\!\sum_{w\in W_1}x_w\geq \Big(1\!+\!|W_1|\big(1\!-\!\frac{4.5r}{\rho}\big)\Big)x_{u^*}
\geq \frac{3}{4}\big(|W_1|\!+\!1\big)x_{u^*}\!+\!x_{u_2},
\]
which also simplifies to inequality \eqref{eq-24}.

Now, combining inequalities \eqref{eq-23} and \eqref{eq-24}, we get:
\begin{equation}\label{eq-25}
\sum_{w\in W}\!f(w)\geq \frac{2x_{u^*}}{\rho\!-\!1}\big(|W|\!-\!|W_1|\big)\big(|W_1|+1\big).
\end{equation}
Furthermore, subtracting inequality \eqref{eq-22} from \eqref{eq-25}, we obtain:
\begin{equation}\label{eq-26}
(x_{u_1}\!+\!x_{u_2})\!-\!\sum_{w\in W}\!f(w) \leq \frac{x_{u^*}}{\rho\!-\!1}
\Big(\big(|W|\!+\!|W_1|\!+\!2\big)\!-\!2\big(|W|\!-\!|W_1|\big)\big(|W_1|\!+\!1\big)\Big).
\end{equation}

We now demonstrate that $|W|=|W_1|.$ 
Suppose to the contrary that $|W|\geq |W_1|+1$. Then, by inequality \eqref{eq-26}, we deduce that 
$(x_{u_1}\!+\!x_{u_2})\!-\!\sum_{w\in W}\!f(w)\!\leq\!\frac{x_{u^*}}{\rho\!-\!1}.$
By Claim \ref{clm-3.3}, $x_u+x_v<\frac23x_{u^*}<x_{u^*}$ for any $uv\in E(U)\setminus\{u_1u_2\}$.
Consequently, from (\ref{eq-2-2}), we obtain:
\[
\rho^2x_{u^*}\!\leq\!\big(m-1\big)x_{u^*}\!+\!(x_{u_1}\!+\!x_{u_2})\!-\!\sum_{w\in W}\!f(w)
\!\leq\! \big(m\!-\!1\!+\!\frac{1}{\rho\!-\!1}\big)x_{u^*}.
\]
This contradicts the assumption that $\rho^2\geq m-1+\frac{2}{\rho-1}.$
Thus, we have $|W|=|W_1|.$

Next, we prove that $E(U)=\{u_1u_2\}$, and for all $u\in U$,
we have
$x_{u}\leq\frac{|W_1|+2}{\rho}x_{u^*}$.
Since $|W_1|\leq r-1$, we have $d(u)\leq1+|W_1|+d_U(u)\leq2r$,
and thus $\rho x_u\leq d(u)x_{u^*}\leq2rx_{u^*}$ for each $u\in U$.
This implies that $x_u+x_v\leq\frac{4r}{\rho}x_{u^*}<\frac14x_{u^*}$ for any $uv\in E(U)$.
From equality (\ref{eq-2-2}), we obtain
$\rho^2x_{u^*}\!\leq\!\big(m-\frac34e(U)\big)x_{u^*}$.
If $e(U)\geq2$, then $\rho^2<m-1$, which leads to a contradiction.
Hence, $E(U)=\{u_1u_2\}$.
Furthermore, we have $d(u)\leq1+|W_1|+d_U(u)\leq|W_1|+2$,
and thus $x_u\leq \frac{d(u)}{\rho}x_{u^*}\leq\frac{|W_1|+2}{\rho}x_{u^*}$ for all $u\in U$.

Now, if $W_1=\emptyset$ or if $N(w)=U$ for each $w\in W_1$, 
then $G$ is isomorphic to $S^+_{m,|W_1|+1}$, and we are done.
Finally, let $|W_1|\geq1$. We show that $N(w)=U$ for each $w\in W_1$.

Assume, for contradiction, that there exists $u_0\in U$ with $0\leq d_{W_1}(u_0)\leq |W_1|-1$.
Then, $\rho x_{u_0}\geq x_{u^*}+d_{W_1}(u_0)(1-\frac{4.5r}{\rho})x_{u^*}\geq\frac12(1+d_{W_1}(u_0))x_{u^*}.$
Moreover, for any $w\in \overline{N}_{W_1}(u_0)$, we have 
$\rho(x_{u^*}\!-\!x_w)\geq x_{u_0}$. Hence, we obtain a lower bound for 
$\sum_{w\in W_1}f(w)$:
\begin{equation}\label{eq-27}
\sum\limits_{w\in \overline{N}_{W_1}(u_0)}\!\!\!d_U(w)(x_{u^*}\!-\!x_{w})
\geq\frac{x_{u^*}}{2\rho^2}\!\!\!\sum\limits_{w\in \overline{N}_{W_1}(u_0)}\!\!\!d_U(w)\big(1\!+\!d_{W_1}(u_0)\big).
\end{equation}

Now, we provide a lower bound for $d_U(w)$ when $w\in W_1$.
Recall that
$x_{u}\leq\frac{|W_1|+2}{\rho}x_{u^*}$ for all $u\in U$.
Thus, $\rho x_w\leq d_U(w)(|W_1|+2)\frac{x_{u^*}}{\rho}$
for every $w\in W_1$, and since $x_w\geq(1-\frac{4.5r}{\rho})x_{u^*}$,
we conclude that $$d_U(w)\geq\frac{\rho(\rho-4.5r)}{|W_1|+2}\geq\frac{\rho^2}{4|W_1|}.$$
Combining this with inequality \eqref{eq-27}, and noting that $0\leq d_{W_1}(u_0)\leq |W_1|-1$, we obtain:
$$\sum_{w\in W_1}\!f(w)
\geq\frac{x_{u^*}}{8|W_1|}\big(|W_1|-d_{W_1}(u_0)\big)\big(1\!+\!d_{W_1}(u_0)\big)\geq\frac{x_{u^*}}{8}.
$$
Observe that $|W_1|\leq r-1$. Then,
$x_{u_1}\!+\!x_{u_2}\leq \frac{2(|W_1|+2)}{\rho}x_{u^*}<\frac{x_{u^*}}{8}$.
From equality (\ref{eq-2-2}), we get:
\[
\rho^2x_{u^*}\!=\!\big(m-1\big)x_{u^*}\!+\!(x_{u_1}\!+\!x_{u_2})\!-\!\sum_{w\in W_1}\!f(w)
\!\leq\! \big(m\!-\!1\big)x_{u^*}.
\]
This contradicts the assumption that $\rho^2\geq m-1.$
Thus, we conclude that 
$N(w)=U$ for each $w\in W_1$, completing the proof of Theorem \ref{Main-thm-2}.
\end{proof}


\section{Concluding Remarks}\label{sec-4}

Nosal \cite{Nosal-1} demonstrated that every graph $G$ with spectral radius $\rho(G)>\sqrt{m}$ contains a triangle, 
implying that $G$ is non-bipartite.  
On the other hand, bipartite graphs cannot have book subgraphs.
Therefore, studies related to booksize are inherently restricted to non-bipartite graphs.

Theorem~\ref{Main-thm-0} implies that every non-bipartite graph $G$ with spectral radius \( \rho(G) > \sqrt{m} \)
always satisfies 
$$\mathrm{bk}(G) > \frac{1}{144} \sqrt{m}.$$
Example~\ref{exam} illustrates that for certain non-bipartite graphs with \(\rho(G)\geq\sqrt{m}\), 
the booksize can reach \(\mathrm{bk}(G) = \frac{1}{3}\sqrt{m}\). 
This observation naturally raises the question of whether the condition in Theorem~\ref{Main-thm-0} can be improved.

Theorem \ref{Main-thm-2} further implies an interesting result:  
even with a weaker spectral condition, we can still derive 
the supersaturation of booksize.  
Specifically, if \(G\) is a non-bipartite graph with \(m\) edges and spectral radius \(\rho\) such that  
\(
\rho^2 \!\geq\! m \!-\! 1 \!+\! \frac{2}{\rho\! -\! 1},
\)
then  
\[
\mathrm{bk}(G)>\frac{1}{240} \sqrt{m},
\]  
unless $G\cong S_{m,s}^+$ for some positive integer $s\leq \frac{1}{240} \sqrt{m}$.

Notably, Example~\ref{exam} presents the graph \(C_3^\square [k]\), 
a non-bipartite regular graph with \(m\) edges whose spectral radius 
\(\rho\!=\sqrt{m},\) 
for which \(\mathrm{bk}(C_3^\square [k]) = k = \frac{1}{3}\sqrt{m}\).  
This example demonstrates a substantial gap between our previously derived lower bounds and the current value, 
naturally motivating the following open question:

\begin{ques}
Excluding the graph family $\{S_{m,s}^+\}$,
what is the best possible constant \(C\) such that every non-bipartite graph \(G\),
with \(m\) edges and spectral radius \(\rho\) satisfying  
\[
\rho^2\geq m - 1 + \frac{2}{\rho \!- \! 1}
\]  
has booksize \(\mathrm{bk}(G) > C \sqrt{m}\)?  
\end{ques}

Theorem \ref{Main-thm-2} and Example~\ref{exam} indicate that $\frac1{240}\leq C\leq \frac13.$

\end{document}